\newcommand\blank[1][.6em]{%
  \mbox{\kern.06em\vrule height.5ex}%
  \vbox{\hrule width#1}%
  \hbox{\vrule height.5ex}}
\begin{document}
\begin{frontmatter}

\title{List coloring digraphs\tnoteref{thanks}}
\tnotetext[thanks]{The results presented in this paper are part of the Master Degree report of the third author at LIP (ENS de Lyon).
The first author was supported by ERC Advanced Grant GRACOL, project no. 320812.
The second author was supported by an FQRNT postdoctoral research grant and CIMI research fellowship.}

\author[dtu]{Julien Bensmail}
\author[toulouse]{Ararat Harutyunyan}
\author[lip]{Ngoc Khang Le}

\address[dtu]{Department of Applied Mathematics and Computer Science \\ Technical University of Denmark \\ DK-2800 Lyngby, Denmark \\~}
\address[toulouse]{Institut de Math\'ematiques de Toulouse \\ Université Toulouse III \\ 31062 Toulouse Cedex 09, France\\~}
\address[lip]{Laboratoire d'Informatique du Parall\'elisme \\ \'Ecole Normale Sup\'erieure de Lyon \\ 69364 Lyon Cedex 07, France\\~}

\begin{abstract}
The dichromatic number $\vec{\chi}(D)$ of a digraph $D$ is the least number $k$ such that the 
vertex set of $D$ can be partitioned into $k$ parts each of which induces an acyclic subdigraph. 
Introduced by Neumann-Lara in 1982, 
this digraph invariant shares many properties with the usual chromatic number of graphs and can be seen as the natural analog of the graph chromatic number. In this paper, we study the list dichromatic number of digraphs, giving evidence that this notion generalizes the list chromatic number of graphs. 
We first prove that the list dichromatic number and the dichromatic number behave the same in many contexts,
such as in small digraphs (by proving a directed version of Ohba's Conjecture), tournaments, and random digraphs.
We then consider bipartite digraphs, and show that their list dichromatic
number can be as large as $\Omega(\log_2 n)$.
We finally give a Brooks-type upper bound on the list dichromatic number of digon-free digraphs.
\end{abstract}

\begin{keyword} 
list coloring; list dichromatic number; dichromatic number; digraphs.
\end{keyword}
\end{frontmatter}

\newtheorem{theorem}{Theorem}
\newtheorem{lemma}[theorem]{Lemma}
\newtheorem{conjecture}[theorem]{Conjecture}
\newtheorem{observation}[theorem]{Observation}
\newtheorem{claim}[theorem]{Claim}
\newtheorem{corollary}[theorem]{Corollary}
\newtheorem{proposition}[theorem]{Proposition}
\newtheorem{question}[theorem]{Question}
\numberwithin{theorem}{section}

\newcommand{\tDelta}{\tilde{\Delta}}
\newcommand{\PP}{\mathbb{P}}
\newcommand{\EE}{\mathbb{E}}


\section{Introduction} \label{section:introduction}

A \textit{$k$-list-assignment} to a graph (or digraph) $G$ is an assignment of a set of positive integers $L(v)$ to each vertex $v$ such that $|L(v)| \geq k$ for each vertex $v$.
A \textit{proper} \textit{$k$-}\textit{vertex-coloring} of an undirected (simple) graph $G$ is a 
partition $V_1,...,V_k$ of $V(G)$ into $k$ independent sets. The \textit{chromatic number} of $G$, denoted $\chi(G)$, is the least~$k$ such that $G$ is properly 
$k$-vertex-colorable. The chromatic number has often been considered in a more general context. 
Namely, we say that $G$ is \textit{$k$-list-colorable} if for any $k$-list-assignment $L$ to $G$, 
there is a proper vertex-coloring such that each vertex is assigned a color from its list. 
The \textit{list chromatic number} $\chi_\ell(G)$ of $G$ is then the least $k$ such that $G$ is properly $k$-list-colorable. 
Many papers have studied how the chromatic number and list chromatic number parameters behave in general 
(see notably~\cite{ERT79} by Erd\H{o}s, Rubin and Taylor, who have introduced the subject). 

Our purpose in this paper is to study list coloring of directed graphs.
The digraphs considered in this paper will not have loops or parallel arcs -
we do, however, allow directed cycles of length~$2$ (or \textit{digons}). 
A subset $S$ of vertices of a digraph $D$ is called \textit{acyclic} if the induced
subdigraph on $S$ contains no directed cycle. The \textit{dichromatic number} $\vec{\chi}(D)$ of $D$ is the smallest integer $k$ such that
$V(D)$ can be partitioned into $k$ sets $V_1, ..., V_k$ where each $V_i$ is acyclic. 
Note that, equivalently, the dichromatic number is the smallest integer $k$ such that the vertices of $D$ can be colored with
$k$ colors so that there is no monochromatic directed cycle.
It is easy to see that for any undirected graph $G$ and 
its bidirected digraph $D$ obtained from $G$ by replacing each edge by two oppositely oriented arcs, we have $\chi(G) = \vec{\chi}(D)$.
The dichromatic number was first introduced by Neumann-Lara~\cite{Neu82}
in 1982. It was independently rediscovered by Mohar \cite{M2003} who showed
that this parameter allows natural generalization of circular colorings of graphs to digraphs. Further studies by Bokal \textit{et al.}~\cite{BFJKM04} 
and Harutyunyan and Mohar (see notably~\cite{HM11,HM11b,HM12,HM15,M10}) demonstrated that this digraph invariant generalizes
many results on the graph chromatic number. Some of the results obtained
in the above mentioned papers include extensions of 
Brooks' Theorem, Wilf's Eigenvalue Theorem, Gallai's Theorem,
as well as examples of classical theorems from extremal graph theory.  

Here, we study the \emph{list dichromatic number} of digraphs. Given a $k$-list-assignment $L$ to a digraph $D$,
we say that $D$ is \emph{$L$-colorable} if there is a coloring of the vertices such that each vertex $v$ is colored with a color from $L(v)$ and
$D$ does not contain a monochromatic directed cycle. The \emph{list dichromatic number} $\vec{\chi_{\ell}}(D)$ of $D$ is the smallest
integer $k$ such that $D$ is $L$-colorable for any $k$-list-assignment $L$. We note that the definition of
list coloring of digraphs is not quite new as it first appeared in \cite{HM11b} where
the authors derived an analog of Gallai's Theorem for digraphs, as well as
in \cite{HM15}.
Our goal in this paper is to initiate the study of the list dichromatic number. In particular, we point out that the relationship 
between the dichromatic number and list dichromatic number is surprisingly
similar to the relationship between 
the graph chromatic number and the list chromatic number. This is mainly done by extending known results involving the chromatic number and 
list chromatic number to directed graphs.

\medskip

This paper is organized as follows. 
We start, in Section~\ref{section:ohba}, by showing that the equality $\vec{\chi}=\vec{\chi_\ell}$ holds for small digraphs,
by proving a directed version of Ohba's Conjecture.
We then consider bipartite digraphs in Section~\ref{section:bipartite},
and in particular show that while $\vec{\chi} \leq 2$ holds for these digraphs, 
their value of $\vec{\chi_\ell}$ can be as large as $\Omega(\log_2 n)$, obtaining a natural digraph analogue of a known result in \cite{ERT79}.
In Sections~\ref{section:tournaments} and~\ref{section:random}, we consider tournaments
and random digraphs, respectively, and show that their parameters $\vec{\chi}$ and $\vec{\chi_\ell}$ behave in a similar way.
We then provide, in Section~\ref{section:upper}, a Brooks-like upper bound on $\vec{\chi_\ell}$ for digon-free
digraphs, extending an analogous bound on the dichromatic number in
\cite{HM11}.
We finally end this paper by addressing an intriguing question in concluding Section~\ref{section:ccl}.
Most of the tools and arguments employed in our proofs are probabilistic; for more details,
we refer the reader to the book of Molloy and Reed~\cite{MR02}.

\section{Ohba's Conjecture for digraphs} \label{section:ohba}

In~\cite{O02}, Ohba conjectured that the equality $\chi=\chi_\ell$ should hold for every undirected graph with relatively small order
(compared to their chromatic number). Ohba's Conjecture was recently proved in \cite{NRW14} by Noel, Reed and Wu. 

\begin{theorem}[Ohba's Conjecture] \label{C:2:2}
For every graph $G$ with $|V(G)|\leq 2\chi(G)+1$, we have $\chi(G)=\chi_\ell(G)$.
\end{theorem}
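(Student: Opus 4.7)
The plan is to reduce to complete multipartite graphs and then proceed by induction on $|V(G)|$, using at each step a color-absorption argument that shrinks both the graph and the list assignment.

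For the reduction, given any $G$ with $\chi(G)=k$, I would fix a proper $k$-coloring of $G$ and let $K$ be the complete multipartite graph whose parts are the color classes of this coloring. Then $|V(K)|=|V(G)|$, $\chi(K)=k$, and $G$ is a subgraph of $K$, so $\chi_\ell(G)\le \chi_\ell(K)$ by subgraph monotonicity of the list chromatic number. It therefore suffices to prove the theorem when $G$ is complete multipartite.

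Now let $G=K_{n_1,\ldots,n_k}$ with $n_1+\cdots+n_k\le 2k+1$, fix a $k$-list assignment $L$, and induct on $|V(G)|$. The main step is to find a color $c$ and a set $X$ of vertices, one from each of several parts, such that $c\in L(v)$ for every $v\in X$ and the complete multipartite graph $G-X$ equipped with the induced list assignment (obtained by deleting $c$ everywhere) still satisfies the hypothesis $|V(G-X)|\le 2(k-1)+1$. The induction hypothesis then finishes the coloring. The existence of such a $c$ and $X$ is established via a Hall-type matching argument between colors and parts, exploiting that $|V(G)|\le 2k+1$ forces many parts to have size $1$ or $2$ and thus forces substantial overlap between lists.

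The main obstacle is handling the case where no color dominates and the parts have mixed small sizes. Parts of size exactly $3$ are the critical obstruction: the bound $2k+1$ is tight against list assignments concentrated on such parts, and a direct Hall-type matching only yields the weaker bound $|V(G)|\le 2k-1$. Pushing through to $2k+1$ requires the delicate structural and probabilistic analysis carried out by Noel, Reed, and Wu in~\cite{NRW14}, and essentially all the technical weight of the argument would live in this step.
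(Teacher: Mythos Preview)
The paper does not prove this theorem at all: Theorem~\ref{C:2:2} is stated as a known result of Noel, Reed and Wu~\cite{NRW14} and is used only as a black box in the proof of Theorem~\ref{theorem:Ohba-digraph}. There is therefore no proof in the paper to compare your attempt against.

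As for your proposal on its own merits, it is not a proof but a sketch that explicitly defers the entire difficulty to~\cite{NRW14}. Your reduction to complete multipartite graphs is correct and standard, and your high-level description of an inductive color-absorption strategy is in the right spirit. But the substance of the Noel--Reed--Wu argument is precisely the step you wave at in your last paragraph, and you give no indication of how to carry it out; saying ``essentially all the technical weight of the argument would live in this step'' and then citing the paper that does it is not a proof. If the intent was merely to indicate that the result is known and point to the reference, that matches exactly what the paper does, and a one-line citation would suffice.
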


\noindent 

One could naturally wonder whether the directed analog of Ohba's Conjecture holds in our context.
We prove this to be true in the following result.

\begin{theorem} \label{theorem:Ohba-digraph}
For every digraph $D$ with $|V(D)|\leq 2\vec{\chi}(D)+1$, we have $\vec{\chi}(D)=\vec{\chi_\ell}(D)$.
\end{theorem}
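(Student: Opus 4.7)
The inequality $\vec{\chi_\ell}(D) \geq \vec{\chi}(D)$ is immediate from the definitions, so the task is to prove $\vec{\chi_\ell}(D) \leq k$, where $k := \vec{\chi}(D)$ and $n := |V(D)| \leq 2k+1$.

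My plan is to reduce the statement to the undirected Ohba Conjecture (Theorem~\ref{C:2:2}) by constructing an auxiliary undirected graph $H$ on the vertex set $V(D)$ with the following two properties:
\begin{enumerate}[(i)]
    \item $\chi(H) = k$;
    \item every independent set of $H$ is acyclic in $D$.
\end{enumerate}
Once such an $H$ is in hand, Theorem~\ref{C:2:2} applies, since $|V(H)| = n \leq 2k+1 = 2\chi(H)+1$, and yields $\chi_\ell(H) = \chi(H) = k$. Given any $k$-list-assignment $L$ to $D$, reading $L$ as a list-assignment to $H$ produces a proper $L$-coloring of $H$; by (ii), its color classes are acyclic in $D$, so the coloring is $L$-dichromatic for $D$. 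This gives $\vec{\chi_\ell}(D) \leq k$.

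To build $H$, I would fix an optimal acyclic partition $V_1, \ldots, V_k$ of $D$. For each directed cycle $C$ of $D$, the set $V(C)$ must meet at least two of the classes (since each $V_i$ is acyclic). I would choose $u_C \in V(C) \cap V_i$ and $v_C \in V(C) \cap V_j$ with $i \neq j$, and add $\{u_C, v_C\}$ as an edge of $H$. Property (ii) is then immediate: any set containing a directed cycle $C$ contains both endpoints of the corresponding edge, hence is not independent in $H$. For property (i), the bound $\chi(H) \leq k$ is witnessed by $V_1, \ldots, V_k$, since every added edge is between distinct classes; the reverse bound $\chi(H) \geq k$ follows from (ii), because a proper coloring of $H$ with $k'$ colors would, by (ii), yield an acyclic partition of $D$ into $k'$ parts, forcing $k' \geq \vec{\chi}(D) = k$.

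The crux of the plan is the construction of $H$: every cycle-guarding edge must be forced to cross between two classes of the fixed optimal partition, which simultaneously prevents $\chi(H)$ from exceeding $k$ and ensures that each directed cycle of $D$ is captured by an edge of $H$. Once both (i) and (ii) hold, the theorem reduces cleanly to Theorem~\ref{C:2:2}, and the list-coloring content of the result is entirely inherited from the undirected Ohba Conjecture.
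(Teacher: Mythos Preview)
Your proof is correct and follows essentially the same route as the paper: reduce to the undirected Ohba theorem via an auxiliary $k$-partite graph on $V(D)$ whose parts are the classes of a fixed optimal acyclic partition. The only difference is that the paper takes $H$ to be the \emph{complete} $k$-partite graph on $V_1,\ldots,V_k$, which makes property~(ii) immediate (independent sets lie inside a single $V_i$) and removes the need to pick one edge per directed cycle; your sparser $H$ is a subgraph of theirs and works for the same reason.
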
 

\begin{proof}
Set $\vec{\chi}(D)=k$, and consider a partition $V_1, ..., V_k$ of $V(D)$ such that each $V_i$ is acyclic. Now consider
the following undirected graph $G_D$ obtained from $D$:

\begin{itemize}
	\item $V(G_D) = V(D)$,
	\item $uv \in E(G_D)$ iff $u \in V_i$ and $v \in V_j$ with $i \neq j$.
\end{itemize}

\noindent Note that $G_D$ is a complete $k$-partite graph with vertex partition $V_1, ..., V_k$. Therefore, $\chi(G_D)=k$. 
Furthermore, it is easy to see that $G_D$ satisfies the condition stated in Theorem \ref{C:2:2}, since:
\begin{equation*}
|V(G_D)| = |V(D)| \leq 2\vec{\chi}(D)+1 = 2k+1 = 2\chi(G_D)+1.
\end{equation*} 
\noindent Hence, $\chi(G_D)=\chi_\ell(G_D)=k$ since Theorem~\ref{C:2:2} is verified~\cite{NRW14}.

Now consider any $k$-list-assignment $L$ to $D$. Since $\chi_\ell(G_D)=k$, we can find an $L$-coloring $\phi$
of $G_D$, \textit{i.e.} an assignment of colors to the vertices from their respective list so that there is no monochromatic
edge connecting two parts $V_i$ and $V_j$ with $i \neq j$. It should be now clear that $\phi$ is also an $L$-coloring
of $D$:

\begin{itemize}
	\item since, for every edge $uv$ with $u \in V_i$, $v \in V_j$ and $i \neq j$, we have $\phi(u) \neq \phi(v)$,
	there cannot be any monochromatic cycle including vertices from different $V_i$'s;
	
	\item since each $V_i$ is acyclic, there cannot be any monochromatic cycle including vertices from a single $V_i$.
\end{itemize}

\noindent Thus, $\vec{\chi_\ell}(D)=k=\vec{\chi}(D)$.  
\end{proof}


\section{Bipartite digraphs} \label{section:bipartite}

One of the classical results in the theory of list coloring of graphs is the 
result due to Erd\H{o}s, Rubin and Taylor \cite{ERT79} 
which asserts that there exist
bipartite graphs $G$ with $\chi_\ell(G) = \Omega(\log_2 n)$.  
Indeed, they proved the following.

\begin{theorem}[Erd\H{o}s, Rubin and Taylor]
For every complete bipartite graph $K_{n,n}$, we have $\chi_\ell(K_{n,n}) = (1+o(1)) \log_2 n$.
\end{theorem}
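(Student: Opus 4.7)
The plan is to establish matching upper and lower bounds, each asymptotic to $\log_2 n$, through separate probabilistic arguments.

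\textbf{Upper bound.} First I would show $\chi_\ell(K_{n,n}) \leq \lceil \log_2(2n) \rceil + 1$. Given a list assignment $L$ with $|L(v)| \geq k$ for every vertex, let $U = \bigcup_v L(v)$, and independently assign each color $c \in U$ to an ``$A$-side'' set $U_A$ or a ``$B$-side'' set $U_B$ with probability $1/2$ each. A vertex $a$ on side $A$ has $L(a) \cap U_A = \emptyset$ with probability at most $2^{-k}$, and analogously for vertices on side $B$. A union bound over all $2n$ vertices gives total failure probability at most $2n \cdot 2^{-k}$, which is less than $1$ as soon as $k > \log_2(2n)$. On any failure-free realization, I would color each $a \in A$ with an arbitrary color from $L(a) \cap U_A$ and each $b \in B$ with one from $L(b) \cap U_B$: the color sets used on the two sides are disjoint, so no edge is monochromatic.

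\textbf{Lower bound.} For the converse direction $\chi_\ell(K_{n,n}) \geq (1-o(1))\log_2 n$, I would apply the first moment method to a random list assignment. Fix $k = \lfloor \log_2 n - C \log_2 \log_2 n \rfloor$ for a large constant $C$, set $m = k^3$, and give each of the $2n$ vertices an independent uniformly random $k$-subset of $[m]$ as its list. Such an $L$ is $L$-colorable iff some partition $[m] = X \sqcup Y$ has $X$ meeting every list on side $A$ and $Y$ meeting every list on side $B$. Writing $q_A(x) = \binom{m-x}{k}/\binom{m}{k}$ and $q_B(x) = \binom{x}{k}/\binom{m}{k}$ for the probabilities that a uniform $k$-subset misses $X$ (of size $x$) and $Y$ respectively, independence of the lists gives $\Pr[X \text{ works}] \leq \exp(-n(q_A(x) + q_B(x)))$. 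Using the elementary bound $\binom{m-x}{k}/\binom{m}{k} \geq ((m-x-k)/m)^k$ together with the convexity inequality $t^k + (1-t)^k \geq 2^{1-k}$ for $t \in [0,1]$, one gets $q_A(x) + q_B(x) \geq 2^{1-k}(1-o(1))$ uniformly for $x \in [k, m-k]$. The first moment then bounds the expected number of working partitions by $2^m \exp(-n \cdot 2^{1-k}(1-o(1)))$, which is $o(1)$ for our choice of parameters. Hence some list assignment is not $L$-colorable, giving $\chi_\ell(K_{n,n}) > k = (1-o(1))\log_2 n$.

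\textbf{Main obstacle.} The technical heart is the uniform lower bound $q_A(x) + q_B(x) \geq 2^{1-k}(1-o(1))$: one must control the approximation error between $\binom{m-x}{k}/\binom{m}{k}$ and $((m-x)/m)^k$ (which forces $m$ to be a small polynomial in $k$---the choice $m = k^3$ leaves a multiplicative slack of $1 - O(1/k)$), and separately handle the edge regions $x < k$ and $x > m-k$, where one of $q_A, q_B$ vanishes but the other is so close to $1$ that $(1-q)^n$ is already negligible by an ad hoc estimate. Balancing the prefactor $2^m$ from the sum over subsets $X$ against the exponent $n \cdot 2^{1-k}$ is what pins down the tolerable $k$ and ultimately delivers the matching $(1-o(1))\log_2 n$ lower bound.
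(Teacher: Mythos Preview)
The paper does not give its own proof of this theorem: it is quoted as a result of Erd\H{o}s, Rubin and Taylor and left unproved, serving only as motivation for the directed analogues in Section~3. So there is no proof in the paper to compare against directly.

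That said, your argument is correct and is one of the standard routes to the sharp result. Your upper bound is exactly the random bipartition of the colour universe that the paper uses for its directed analogue (Theorem~\ref{theorem:upper-bound-bipartite}). Your lower bound---random $k$-lists from a palette of size $m=k^3$, then a first-moment bound over all bipartitions $X\sqcup Y$ of the palette---differs from what the paper does for its directed lower bound (Theorem~\ref{theorem:lower-random-bipartite}). There the lists are \emph{explicit}: every $k$-subset of a $(2k-1)$-colour palette is assigned to the same number of vertices on each side, and a pigeonhole (``major colour'') argument forces any colouring to repeat a colour on both sides. That explicit construction is cleaner but, since $\binom{2k-1}{k}\sim 4^k/\sqrt{\pi k}$, it only yields $k\sim\tfrac12\log_2 n$, i.e.\ the right order of magnitude but not the sharp constant; the paper accordingly claims only $\Omega(\log_2 n)$ in Theorem~\ref{theorem:lower-random-bipartite}. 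Your first-moment approach is precisely what is needed to reach the full $(1+o(1))\log_2 n$, at the cost of the extra care you correctly flag (controlling the ratio $\binom{m-x}{k}/\binom{m}{k}$ uniformly in $x$, the convexity step $t^k+(1-t)^k\ge 2^{1-k}$, and the boundary ranges $x<k$, $x>m-k$).
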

  
Since $\chi(G) \leq 2$ holds for every bipartite graph $G$, clearly $\vec{\chi}(D) \leq 2$ holds for every bipartite digraph $D$ (by a \emph{bipartite digraph} we mean a digraph that can be obtained by orienting the edges of a bipartite graph). A natural question is then how large
can $\vec{\chi_\ell}(D)$ be for bipartite digraphs. Interestingly, we find that $\vec{\chi_\ell}(D)$ can also be as large as $\Omega(\log_2 n)$.

%

%


\subsection{Upper bounds}

We start by exhibiting the following upper bound on $\vec{\chi_\ell}$ for bipartite digraphs.

\begin{theorem} \label{theorem:upper-bound-bipartite}
For every bipartite digraph $D$ with bipartition $(V_1,V_2)$ satisfying $|V_1|=|V_2|=n$, $$\vec{\chi_\ell}(D)\leq \lfloor \log_2 n\rfloor + 2.$$
\end{theorem}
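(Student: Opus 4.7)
The plan is to reduce the problem to the classical Erdős--Rubin--Taylor bound on the list chromatic number of the complete bipartite graph $K_{n,n}$. The crucial observation is that if $G$ denotes the underlying undirected graph of $D$, then any proper $L$-coloring of $G$ is automatically an $L$-dicoloring of $D$: if no arc of $D$ is monochromatic, then certainly no directed cycle of $D$ is monochromatic. Hence $\vec{\chi_\ell}(D) \leq \chi_\ell(G)$, and since $G$ is a bipartite graph with both parts of size $n$, it is a subgraph of $K_{n,n}$, so $\chi_\ell(G) \leq \chi_\ell(K_{n,n})$. Therefore, it suffices to establish the bound $\chi_\ell(K_{n,n}) \leq \lfloor \log_2 n \rfloor + 2$.

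To prove this, I would reproduce the standard Erdős--Rubin--Taylor random partition argument. Set $k := \lfloor \log_2 n \rfloor + 2$ and fix an arbitrary $k$-list-assignment $L$ on $V(D) = V_1 \cup V_2$. Let $C = \bigcup_{v} L(v)$, and form a random subset $X \subseteq C$ by including each color in $X$ independently with probability $1/2$. I would then attempt to color every vertex $v \in V_1$ with an arbitrary color from $L(v) \cap X$ and every vertex $v \in V_2$ with an arbitrary color from $L(v) \setminus X$; this gives a proper coloring of the underlying bipartite graph provided every vertex still has an available color.

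The only way the strategy can fail is if some $v \in V_1$ satisfies $L(v) \cap X = \emptyset$ or some $v \in V_2$ satisfies $L(v) \subseteq X$. Each such bad event depends only on the outcomes of $k$ independent coin flips, so its probability is exactly $2^{-k}$. Since there are at most $2n$ such bad events in total, a union bound gives failure probability at most
\begin{equation*}
2n \cdot 2^{-k} = \frac{n}{2^{\lfloor \log_2 n \rfloor + 1}} < 1,
\end{equation*}
using $n < 2^{\lfloor \log_2 n \rfloor + 1}$. Hence some outcome of $X$ yields a valid coloring, which (as argued above) is also an $L$-dicoloring of $D$.

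There is essentially no genuine obstacle: the argument is a direct combination of the trivial monotonicity $\vec{\chi_\ell}(D) \leq \chi_\ell(G)$ with the standard probabilistic list-coloring bound for $K_{n,n}$. The only point requiring a small verification is that the exact constant $\lfloor \log_2 n \rfloor + 2$ is enough to drive the union-bound quantity $2n \cdot 2^{-k}$ strictly below $1$, which follows from the elementary inequality $2^{\lfloor \log_2 n \rfloor + 1} > n$.
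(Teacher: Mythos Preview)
Your proof is correct and is essentially the same as the paper's: both use the random bipartition of the color set and a union bound over the $2n$ bad events to show that some partition leaves every vertex an available color on the ``correct'' side. The only cosmetic difference is that you explicitly factor through the inequality $\vec{\chi_\ell}(D)\le \chi_\ell(K_{n,n})$ before running the Erd\H{o}s--Rubin--Taylor argument, whereas the paper applies the identical random-partition argument directly to $D$.
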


\begin{proof}
We use the probabilistic method. Assume $L$ is a $(\lfloor \log_2 n\rfloor + 2)$-list-assignment to $D$, and let $C$ 
denote the set of all colors (\textit{i.e.} $C=\cup_{v \in V(D)} L(v)$). We partition $C$ into two parts $C_1$ and $C_2$ that will be used to color the vertices in $V_1$ and $V_2$, respectively. To obtain $(C_1,C_2)$, just consider every color $c \in C$, and put it randomly in $C_1$ or $C_2$ with equal probability $1/2$.

For every vertex $v \in V_i$ of $D$, let $E_v$ be the event that $L(v)\cap C_i=\emptyset$. Clearly if none of the $E_v$'s occurs, then we can freely assign to each vertex $v$ a color from $L(v) \cap C_i$ since no monochromatic directed cycle can appear by the partition of $C$. We would hence like none of the $E_v$'s to happen. Since a particular event $E_v$ with, say, $v \in V_1$ 
can only occur if all colors of $L(v)$ were put into $C_2$, we deduce: $$\PP\left[\cup_{v\in V(D)}E_v\right] \leq 2n \cdot \PP[E_v] \leq \frac{2n}{2^{\lfloor \log_2 n\rfloor + 2}} < 1.$$
Hence, with positive probability none of the $E_v$'s happens, implying the claim.
\end{proof}
\medskip

Theorem \ref{theorem:upper-bound-bipartite} can be extended to non-bipartite
digraphs. Indeed, by a virtually identical argument one can prove the following.

\begin{theorem} \label{theorem:upper-bound-chi}
For every digraph $D$, we have $\vec{\chi_\ell}(D) \leq \vec{\chi}(D) \ln n$.
\end{theorem}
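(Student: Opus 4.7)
The plan is to mimic the proof of Theorem~\ref{theorem:upper-bound-bipartite}, now partitioning the colour pool into $k := \vec{\chi}(D)$ random parts (one per acyclic class) rather than into two. Set $n := |V(D)|$, fix an acyclic partition $V_1, \ldots, V_k$ of $V(D)$ (which exists by definition of $\vec{\chi}(D)$), and let $L$ be a $t$-list-assignment to $D$, where the threshold $t$ will be pinned down at the end. Let $C := \bigcup_{v \in V(D)} L(v)$, and independently place each colour $c \in C$ into one of $C_1, \ldots, C_k$ chosen uniformly at random.

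For every vertex $v \in V_i$, let $E_v$ denote the bad event that $L(v) \cap C_i = \emptyset$. If none of the $E_v$'s occurs, then we may assign to each $v \in V_i$ an arbitrary colour from $L(v) \cap C_i$: any two vertices receiving a common colour are then forced to lie in the same acyclic class $V_i$, so no monochromatic directed cycle can occur. It therefore suffices to show $\PP\bigl[\bigcup_v E_v\bigr] < 1$.

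Because the colours of $L(v)$ are placed independently, $\PP[E_v] = (1 - 1/k)^{|L(v)|} \leq (1 - 1/k)^t \leq e^{-t/k}$. A union bound over the $n$ vertices of $D$ gives $\PP\bigl[\bigcup_v E_v\bigr] \leq n\, e^{-t/k}$, which is strictly less than $1$ as soon as $t > k \ln n$. Choosing $t$ to be the smallest integer exceeding $\vec{\chi}(D) \ln n$ (absorbing the unit slack into the customary rounding, exactly as in Theorem~\ref{theorem:upper-bound-bipartite}) yields the announced bound $\vec{\chi_\ell}(D) \leq \vec{\chi}(D) \ln n$.

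No step is expected to be delicate, which is precisely why the authors write that the argument is ``virtually identical'': the only conceptual point to verify is that randomising the \emph{colour} pool (and not the vertex set) is enough to rule out monochromatic cycles, and this follows immediately because any monochromatic pair is confined to a single acyclic $V_i$. The remaining work is a routine replacement of the constant~$2$ by $k$ in the $(1-1/k)^t$ estimate.
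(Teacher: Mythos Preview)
Your proposal is correct and follows exactly the approach the paper intends: it is the straightforward $k$-part generalisation of the proof of Theorem~\ref{theorem:upper-bound-bipartite}, replacing the random $2$-partition of the colour pool by a random $k$-partition and the estimate $2^{-t}$ by $(1-1/k)^t \le e^{-t/k}$. The only point worth flagging is the usual off-by-one slack (you need $t > k\ln n$ strictly), which you already note and which the paper, like Theorem~\ref{theorem:upper-bound-bipartite}, absorbs into the stated bound.
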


Note that Theorem~\ref{theorem:upper-bound-chi} is already known for the graph chromatic number,
\textit{i.e.} it is known that $\chi_\ell(G) \leq \chi(G) \ln n$ holds for every graph $G$.


\subsection{Lower bound}


In this section, we show that there exist bipartite digraphs $D$
with $\vec{\chi_\ell}(D) = \Omega (\log_2 n)$. We say that a random graph (or digraph) $G$ of order $n$ \textit{asymptotically almost surely (\textit{a.a.s.})} has a property $P$ if the probability that $G$ has $P$ tends to~$1$ as~$n$ approaches infinity.

\begin{theorem} \label{theorem:lower-random-bipartite}
Let $D$ be the random complete bipartite digraph $D$ with bipartition $(V_1,V_2)$ satisfying $|V_1|=|V_2|=n$ obtained from $K_{n,n}$ by orienting each edge
randomly, that is in either direction with probability $1/2$, and independently. Then \textit{a.a.s.} $$\vec{\chi_\ell}(D) = \Omega(\log_2 n).$$
\end{theorem}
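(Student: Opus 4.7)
The plan is to apply the probabilistic method. Set $k = \lfloor c \log_2 n \rfloor$ for a small constant $c > 0$; the goal is to exhibit a $k$-list-assignment $L$ to $V(D)$ such that, a.a.s.\ over the random orientation $D$, no $L$-colouring of $D$ has all colour classes acyclic.

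The key structural observation is that, since $D$ is bipartite, a subdigraph of $D$ is acyclic if and only if it contains no directed $4$-cycle. Writing $\alpha(s,t)$ for the probability that a uniform random orientation of $K_{s,t}$ is acyclic, a direct enumeration (parametrising by the two out-neighbourhoods from the side of size $2$, which must be nested in order to avoid a $4$-cycle) gives $\alpha(2,t) = 2(3/4)^t - (1/2)^t$, and by domination $\alpha(s,t) \leq 2(3/4)^{\max(s,t)}$ whenever $s,t \geq 2$; meanwhile $\alpha(s,t) = 1$ when $\min(s,t) \leq 1$.

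I would then adapt the classical Erd\H{o}s--Rubin--Taylor list construction. Fix the palette $[m]$ with $m = 2k-1$, and for each $k$-subset $A$ of $[m]$ put $t$ copies of the vertex with list $A$ on each of $V_1$ and $V_2$; take $t = k+1$, so $n = (k+1)\binom{2k-1}{k}$ and hence $k = \tfrac{1}{2}(1+o(1))\log_2 n$. A multiplicity-$t$ version of the ERT pigeonhole argument then shows that no $L$-compatible colouring $\phi$ can be \emph{all-small}, that is, satisfy $\min(s_1^c(\phi), s_2^c(\phi)) \leq 1$ for every colour $c$: decomposing $[m]$ into the colours dominant on $V_1$ versus those dominant on $V_2$ forces each subset to have size $\geq m-k+1$, which is incompatible with $m = 2k-1$ once $t \geq k+1$.

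Every $L$-compatible colouring must therefore have some colour class with both sides of size $\geq 2$. The first moment of the number of valid $L$-colourings over random $D$ is then bounded by $\sum_\phi \prod_c \alpha(s_1^c, s_2^c)$, summed over at most $k^{2n}$ compatible colourings. Grouping by $U(\phi) = \sum_{c\,:\,\min\geq 2}(s_1^c+s_2^c)$ and using $\prod_c \alpha \leq 2^{m}(3/4)^{U(\phi)/2}$ gives exponential saving in $U$. The main obstacle is to show, by carefully exploiting the multiplicity $t$ in the list construction, that every $L$-compatible colouring has $U(\phi)$ of order $n$, so that this exponential saving beats the $k^{2n}$ compatibility budget and forces the expectation to be $o(1)$; then by Markov's inequality a.a.s.\ $D$ admits no valid $L$-colouring, giving $\vec{\chi_\ell}(D) \geq k = \Omega(\log_2 n)$.
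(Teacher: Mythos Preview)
Your approach---a first-moment bound over $L$-compatible colourings---is genuinely different from the paper's, but as stated it cannot close. There are two linked problems.

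First, the saving factor is far too weak to beat the colouring count. Even granting the maximal value $U(\phi)=2n$, your bound on the first moment is at least of order $k^{2n}\cdot (3/4)^{n}$, and for any $k\ge 2$ this tends to infinity, not to $0$. The estimate $\alpha(s,t)\le 2(3/4)^{\max(s,t)}$, obtained by dominating down to $K_{2,t}$, throws away almost all of the randomness: the true decay is of order $2^{-\Theta(st)}$ (up to polynomial factors), since by the Manber--Tompa bound $\alpha(s,t)\le (s+1)^t(t+1)^s/2^{st}$. Without this quadratic exponent you cannot beat an entropy of order $n\log k$ coming from the colourings.

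Second, your ``main obstacle'' is not just an obstacle but a false statement: with the ERT lists and multiplicity $t=k+1$, there are $L$-compatible colourings with $U(\phi)=O(k)$, not $\Omega(n)$. For instance, on $V_1$ colour each vertex with the smallest element of $L(v)\cap\{1,\dots,k\}$, and on $V_2$ with the largest element of $L(v)\cap\{k,\dots,2k-1\}$; then colours $1,\dots,k-1$ are unused on $V_2$, colours $k+1,\dots,2k-1$ are unused on $V_1$, and only colour $k$ lies in $B$, with $s_1^k=s_2^k=t$, giving $U=2t=2(k+1)$.

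The paper sidesteps both issues by reversing the order of quantifiers. Rather than averaging over colourings, it first takes a union bound over all pairs $(V_1',V_2')$ of subsets of size $3\log_2 n$ on each side (only $n^{O(\log n)}$ such pairs), using the Manber--Tompa bound to show each is acyclic with probability at most $n^{-\Omega(\log n)}$; hence a.a.s.\ \emph{every} such pair contains a directed cycle. It then chooses multiplicity $t=3k\log_2 n$ (not $k+1$), so that a purely deterministic pigeonhole---exactly your ``major colour'' argument, but with a much larger threshold---forces any $L$-compatible colouring to produce a colour used at least $3\log_2 n$ times on each side. The two steps combine to give the result with no first moment over colourings at all.
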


We first prove the following useful lemma. 

\begin{lemma} \label{L:1:1}
Let $D$ be a random complete bipartite digraph with bipartition $(V_1,V_2)$ satisfying $|V_1|=|V_2|=n$. 
Then for every $V_1'\subset V_1$ and $V_2'\subset V_2$ verifying $|V_1'|,|V_2'|\geq 3\log_2 n$, \textit{a.a.s.} $D[V_1' \cup V_2']$ has a directed cycle.
\end{lemma}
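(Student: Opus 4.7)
The plan is a union bound using the fact that a uniformly random orientation of $K_{M,M}$ is rarely acyclic. Set $M = \lceil 3\log_2 n \rceil$. Since a directed cycle in an induced subdigraph persists in every larger induced subdigraph, it suffices to show that, a.a.s., \emph{every} pair $(V_1', V_2')$ with $|V_1'| = |V_2'| = M$ induces a subdigraph containing a directed cycle.

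For a fixed such pair, $D[V_1' \cup V_2']$ is a uniformly random orientation of $K_{M,M}$. Every acyclic orientation of $K_{M,M}$ arises as the orientation induced by some linear order on its $2M$ vertices (namely, a topological ordering, by orienting each edge from the lower-ranked to the higher-ranked endpoint); hence the number of acyclic orientations is at most $(2M)!$. Therefore
\[
\PP\bigl[D[V_1' \cup V_2'] \text{ is acyclic}\bigr] \leq \frac{(2M)!}{2^{M^2}}.
\]
A union bound over the $\binom{n}{M}^2 \leq n^{2M}$ pairs $(V_1', V_2')$ then gives
\[
\PP\bigl[\exists (V_1', V_2') : D[V_1' \cup V_2'] \text{ is acyclic}\bigr] \leq \frac{n^{2M}\,(2M)!}{2^{M^2}}.
\]
Taking $\log_2$, the exponent is $2M\log_2 n + O(M\log_2 M) - M^2$; substituting $M = 3\log_2 n$ yields $-3(\log_2 n)^2 + o((\log_2 n)^2) \to -\infty$, so the probability tends to $0$ and the lemma follows.

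The main obstacle is balancing the two competing factors in the union bound: the $n^{2M}$ number of subset pairs against the $2^{-M^2}$ decay of the acyclicity probability. The constant $3$ in the hypothesis $M \geq 3\log_2 n$ is chosen precisely so that $M^2 - 2M\log_2 n = 3(\log_2 n)^2$ is positive, leaving room for the $\log_2(2M)! = o((\log_2 n)^2)$ correction. A cruder approach via $(M/2)^2$ edge-disjoint $K_{2,2}$ subdigraphs (each containing a directed $4$-cycle with probability $1/8$) would yield only a tail bound of order $(7/8)^{M^2/4}$, too weak to absorb the $n^{2M}$ factor; the sharp $(2M)!$ count of acyclic bipartite tournaments via topological orderings is what closes the gap.
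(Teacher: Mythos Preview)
Your proof is correct and follows essentially the same union-bound argument as the paper. The only minor difference is the bound on the number of acyclic orientations of $K_{M,M}$: the paper invokes the Manber--Tompa inequality $\prod_v (d(v)+1) = (M+1)^{2M}$, whereas you use the direct topological-ordering count $(2M)!$; both are $2^{O(M\log M)} = 2^{o((\log_2 n)^2)}$ and lead to the same $-3(\log_2 n)^2 + o((\log_2 n)^2)$ exponent.
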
 

\begin{proof}
Let us recall that, by a result of Manber and Tompa~\cite{MT84}, 
the number of acyclic orientations of a given graph $G$ is at most
$$\prod_{v \in V(G)} (d(v) +1).$$
Now consider any two subsets $V_1'\subset V_1$ and $V_2'\subset V_2$ of size $3 \log_2 n$.
Since the number of possible $V_1'$ (and similarly, $V_2'$) is at most ${n \choose 3 \log_2 n}$,
by the remark above and the Union Bound, we get:
\begin{align*}
	\PP\left[\exists V_1', V_2' : D[V_1' \cup V_2'] {\rm ~is~acyclic}\right] & \leq {n \choose 3 \log_2 n}^2 \frac{(3 \log_2 n+1)^{6 \log_2 n}}{2^{9(\log_2 n)^2}} \\
	& \leq n^{6 \log_2 n} \frac{(3 \log_2 n +1)^{6 \log_2 n}}{2^{9(\log_2 n)^2}} \\
	& \leq 2^{6 (\log_2 n)^2} \frac{2^{6 \log_2 n \log_2 \log_2 n(1+o(1))}}{2^{9(\log_2 n)^2}} = o(1).
\end{align*}
Therefore, $D[V_1' \cup V_2']$ \textit{a.a.s.} has a directed cycle.
\end{proof}

\begin{proof}[Proof of Theorem~\ref{theorem:lower-random-bipartite}]
Let $D$ be the random complete bipartite digraph with bipartition $(V_1, V_2)$ satisfying $|V_1|=|V_2|=n=3k{2k-1\choose k}\log_2 n$. Now let
$L$ be a $k$-list-assignment to $D$ from a pool of $2k-1$ colors such that every possible list of $k$ colors is equally distributed among
all vertices, that is $n/{2k-1\choose k} = 3k\log_2 n$ vertices in $V_1$ (and similarly in $V_2$) have the exact same list.

Assume $D$ is $L$-colorable, and let $\phi$ be any $L$-coloring of $D$. By a \textit{major color} in $V_1$ (and similarly, in $V_2$) by $\phi$, we refer to
a color assigned to at least $3\log_2 n$ vertices of $V_1$ (resp. $V_2$). Clearly, there has to be at least $k$ major colors appearing in $V_1$, and similarly
in $V_2$. Otherwise, assuming $\cup_{v \in V(D)} L_v = \{1, 2, ..., 2k-1\}$ and only colors $1, 2, ..., k-1$ are major in, say, $V_1$, there would remain, 
by construction, $3k\log_2 n$ vertices of $V_1$ with list $\{k, k+1, ..., 2k-1\}$ -- so one of these colors necessarily has to be major,
a contradiction. So there exists a common major color in $V_1$ and $V_2$ since $L$ takes value among a pool of $2k-1$ colors only.
Denoting $V_1'$ and $V_2'$ the subsets of vertices from $V_1$ and $V_2$, respectively, assigned that major color by $\phi$, by Lemma~\ref{L:1:1}
we get that \textit{a.a.s.} $D[V_1' \cup V_2']$ has a monochromatic directed cycle. So $\phi$ is not an $L$-coloring, and we get that $\vec{\chi_\ell}(D) > k$, as claimed. An easy computation shows that we furthermore have $k=\Theta(\log_2 n)$.
\end{proof}
\medskip

\textbf{Remark:} Theorem \ref{theorem:lower-random-bipartite} can be strengthened. 
In fact, by adapting an argument of Alon and Krivilevich \cite{AK98} on the list chromatic number of random bipartite graphs and using a sligthly strengthened
version of Lemma \ref{L:1:1}, one can obtain the optimal lower bound of $
\vec{\chi_\ell}(D) \geq \log_2 n (1+o(1))$.

\section{Tournaments} \label{section:tournaments}

We here prove that the list dichromatic number of tournaments asymptotically behaves like their dichromatic number. 
We first prove this roughly for all tournaments in Section~\ref{subsection:gen-tournaments}, 
before showing, in Section~\ref{subsection:rand-tournaments}, something more accurate in the context of random tournaments.


\subsection{General tournaments} \label{subsection:gen-tournaments}

We start off by proving an upper bound on $\vec{\chi_\ell}$ for every tournament.

\begin{theorem} \label{theorem:all-tournaments-upbound}
For every tournament $T$ with order $n$, $$\vec{\chi_\ell}(T) \leq \frac{n}{\log_2 n}(1+o(1)).$$
\end{theorem}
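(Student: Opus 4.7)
The plan is to adapt the classical proof of $\vec{\chi}(T)\leq(1+o(1))n/\log_2 n$ --- namely, iteratively extracting transitive subtournaments of size $\lfloor\log_2 s\rfloor+1$ (guaranteed by the Erd\H{o}s--Moser bound in the residual tournament on $s$ vertices) and assigning each one a single color --- to the list setting. Note that simply combining the classical dichromatic bound with Theorem~\ref{theorem:upper-bound-chi} yields only $\vec{\chi_\ell}(T)\leq O(n)$, which is much too weak, so a more delicate argument is required.

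Concretely, set $k:=\lceil(1+\varepsilon)n/\log_2 n\rceil$ for some small $\varepsilon>0$ and let $L$ be an arbitrary $k$-list-assignment to $T$. The algorithm maintains a residual tournament $T'$ on $s$ vertices together with a residual list-assignment $L'$ (from which every previously used color has been removed from every remaining list). At each iteration, I seek a color $c$ and a transitive subtournament $S\subseteq V_c:=\{v\in V(T'):c\in L'(v)\}$ with $|S|\geq(1-o(1))\log_2 s$; I then color $S$ with $c$, delete $S$ from $T'$, and remove $c$ from every remaining list. Since one fresh color is used per iteration and each iteration deletes $(1-o(1))\log_2 s$ vertices, a straightforward recursion shows that the procedure terminates after at most $(1+o(1))n/\log_2 n$ steps, matching the list size so that no list is ever exhausted.

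The heart of the argument is the existence of such a pair $(c,S)$ at every iteration. Once a color $c$ with $|V_c|\geq s^{1-o(1)}$ is fixed, the Erd\H{o}s--Moser bound applied to $T'[V_c]$ immediately produces a transitive subtournament of the required size. To find a popular color, I rely on the double-counting identity
\begin{equation*}
\sum_c|V_c|=\sum_{v\in V(T')}|L'(v)|\geq sk,
\end{equation*}
which yields some color $c$ with $|V_c|\geq sk/|C'|$, where $|C'|$ denotes the size of the residual palette. This bound is sufficient provided $|C'|=n^{1+o(1)}$.

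Controlling the palette is therefore the main obstacle: for an arbitrary list-assignment, $|C'|$ could be as large as $nk$, which would defeat the averaging. My plan is to first replace each list $L(v)$ by a random sub-list $L_0(v)\subseteq L(v)$ of some carefully chosen intermediate size, and then run the greedy procedure on $L_0$ rather than $L$. A union-bound argument should show that, with positive probability, every vertex's sub-list remains large enough for the greedy recursion to succeed at every level, while the resulting palette is polynomially bounded in $n$. The $o(1)$ slack built into the choice of $k$ is precisely intended to absorb the loss incurred by this sub-sampling step, making the averaging bound effective in every residual instance produced by the algorithm.
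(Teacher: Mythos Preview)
Your greedy core --- repeatedly locate a color held by many uncolored vertices and extract a large transitive subtournament via the Erd\H{o}s--Moser bound --- is exactly the engine the paper uses. The gap is in how you propose to terminate. Your palette-control step via random sub-lists cannot work as stated: if the adversary makes all $n$ lists pairwise disjoint, then for \emph{any} sub-list size $k'$ the union $\bigcup_v L_0(v)$ still has size exactly $nk'$, so the averaging bound $|V_c|\geq s\,|L_0'(v)|/|C'|$ gives only $|V_c|\geq s/n$, which is useless. Shrinking $k'$ does not improve the ratio $|L_0'(v)|/|C'|$; it merely exhausts the sub-lists after $k'$ iterations, long before the $\Theta(n/\log_2 n)$ iterations you need. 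No ``intermediate'' choice of $k'$ escapes this tension, and the same obstruction applies if you instead sample a random sub-palette and intersect.

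The paper sidesteps palette control entirely by fixing a popularity threshold $m=\lfloor n/(\log_2 n)^2\rfloor$: run the greedy phase only while some color lies in at least $m$ residual lists (so Erd\H{o}s--Moser yields an acyclic set of size at least $\log_2 m=\log_2 n-2\log_2\log_2 n$ each time), and then stop. At that point every surviving color appears in fewer than $m$ lists, while every uncolored vertex still has at least $m$ colors left --- the number of greedy rounds is at most $n/(\log_2 n-2\log_2\log_2 n)$, and the $1/\log_2\log_2\log_2 n$ slack in the list size was chosen precisely to absorb this loss. A direct application of Hall's theorem to the bipartite vertex--color incidence graph then gives each remaining vertex its own private color. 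Replacing your subsampling step with this threshold-plus-Hall finish repairs the argument.
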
  

\noindent Our proof of Theorem~\ref{theorem:all-tournaments-upbound} relies on the following lemma, due to Erd\H{o}s and Moser \cite{EM64}.

\begin{lemma}[Erd\H{o}s and Moser] \label{lemma:tournament-acyclic-size}
Every tournament $T$ with order $n$ has an acyclic set of size at least $\log_2 n + 1$.
\end{lemma}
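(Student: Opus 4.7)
The plan is to argue by induction on $n$, based on the following simple closure property: if $v$ is any vertex of a tournament $T$ and $A$ is an acyclic set contained in the out-neighborhood of $v$, then $A \cup \{v\}$ is still acyclic in $T$. The reason is that every arc between $v$ and $A$ is directed from $v$ into $A$, so no directed cycle of $T[A \cup \{v\}]$ can pass through $v$; combined with the acyclicity of $T[A]$, this gives the acyclicity of $T[A \cup \{v\}]$. The symmetric statement holds for the in-neighborhood of $v$.

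Given this observation, I would pick any vertex $v$ of $T$. Since the out-neighborhood and in-neighborhood of $v$ together partition $V(T) \setminus \{v\}$ and therefore have total size $n-1$, one of them contains at least $\lceil (n-1)/2 \rceil$ vertices; by symmetry, assume it is the out-neighborhood. Applying the induction hypothesis to the corresponding sub-tournament produces an acyclic subset $A$, to which I append $v$ via the closure property to obtain the desired acyclic set in $T$.

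The only delicate point is the bookkeeping: the size of an acyclic set is an integer, so the bound ``$\log_2 n + 1$'' must really be read as $\lfloor \log_2 n \rfloor + 1$. The cleanest packaging is therefore to prove the equivalent discrete statement: \emph{if $n \geq 2^k$ for some integer $k \geq 0$, then $T$ has an acyclic set of size at least $k+1$.} The base case $k = 0$ is immediate, and in the inductive step the inequality $n \geq 2^k$ forces $\lceil (n-1)/2 \rceil \geq 2^{k-1}$, so the hypothesis applied to the chosen neighborhood with parameter $k-1$ furnishes an acyclic set of size $k$, which together with $v$ has size $k+1$. Specializing to $k = \lfloor \log_2 n \rfloor$ then yields the lemma. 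I do not expect any substantive obstacle here: the induction is structurally routine, and the only thing to guard against is sloppy arithmetic with fractional $\log_2$ values, which is precisely what the ``$n \geq 2^k$'' reformulation circumvents.
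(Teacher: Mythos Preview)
Your argument is correct and is essentially the classical Erd\H{o}s--Moser proof: pick a vertex, recurse into the larger of its two neighborhoods, and prepend the vertex to the acyclic set found there. The closure property you state is right, and your reformulation in terms of the discrete threshold $n \geq 2^k$ cleanly handles the rounding issue; in particular, $n \geq 2^k$ gives $\lceil (n-1)/2 \rceil \geq 2^{k-1}$ for $k \geq 1$, so the induction goes through.

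Note, however, that the paper does not actually prove this lemma: it is stated as a quoted result of Erd\H{o}s and Moser~\cite{EM64} and used as a black box in the proof of Theorem~\ref{theorem:all-tournaments-upbound}. So there is no ``paper's own proof'' to compare against here; you have simply supplied the standard argument that the paper omits.
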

%

\begin{proof}[Proof of Theorem \ref{theorem:all-tournaments-upbound}]
Assume $T$ is given an $\frac{n}{\log_2 n}(1+\frac{1}{\log_2 \log_2 \log_2 n})$-list-assignment, and apply the following coloring process:
\begin{enumerate}
	\item If there exists a set $S$ of at least $\left \lfloor \frac{n}{(\log_2 n)^2} \right \rfloor$ uncolored vertices all of which contain a common color~$x$ in their list:
	\begin{enumerate}
		\item Pick an acyclic set $A \subseteq S$ of size at least $\log_2 n - 2 \log_2 \log_2 n$ -- such a set exists by Lemma \ref{lemma:tournament-acyclic-size}.
		\item Assign color $x$ to every vertex in $A$ and remove $x$ from all lists.
	\end{enumerate}
	\item Repeat Step~1. as long as possible.
\end{enumerate}

To conclude the proof, it remains to show that we can properly color the remaining vertices $R$ that are still uncolored at the end of the process. Let $C$ be the set of remaining colors in the lists of the vertices in $R$. Now construct the bipartite graph $G$ with bipartition $(R,C)$ in which there is an edge between a vertex $r \in R$ and a vertex $c \in C$ if and only if $c$ belongs to the color list of $r$ after the above procedure. By the halting condition, for every $c \in C$ we clearly have: 
\begin{equation} \label{E:1}
d_G(c)\leq \left \lfloor \frac{n}{(\log_2 n)^2} \right \rfloor.
\end{equation}    
Furthermore, for every $r \in R$, we have:
\begin{align} 
d_G(r) &\geq \frac{n}{\log_2 n}\left(1+ \frac{1}{\log_2 \log_2 \log_2 n}\right) - \frac{n}{\log_2 n - 2 \log_2 \log_2 n} \nonumber \\
&\geq \left \lfloor \frac{n}{(\log_2 n)^2} \right \rfloor.\label{E:2}
\end{align}

Now consider any subset $B \subseteq R$, and let us denote by $N(B)$ the set of vertices of $C$ neighboring some vertices in $R$, and by $e(B,N(B))$ the set of edges of $G$ joining vertices of $B$ and $N(B)$. From Inequalities~(\ref{E:1}) and~(\ref{E:2}), we deduce $$|B| \cdot \left \lfloor \frac{n}{(\log_2 n)^2} \right \rfloor \leq e(B,N(B))\leq |N(B)| \cdot \left \lfloor \frac{n}{(\log_2 n)^2} \right \rfloor,$$ and hence $|B|\leq |N(B)|$. Therefore, by Hall's Theorem, $G$ admits a matching hitting all vertices of $R$. In other words, there is a way to color the vertices of $R$ with colors from their respective lists in such a way that each of the remaining colors is used at most once. Such a coloring, together with the partial coloring resulting from the coloring procedure above, yields an acyclic coloring of $T$.
\end{proof}


\subsection{Random tournaments} \label{subsection:rand-tournaments}

By a \textit{random tournament} of order $n$, we mean a tournament obtained from $K_n$ by orienting each edge $uv$ 
either from $u$ to $v$ or from $v$ to $u$ with equal probability $1/2$. 
We show that the bound in Theorem \ref{theorem:all-tournaments-upbound} is tight up to some constant in the sense of the following result. 
We remark that Theorem \ref{theorem:random-tournaments-lowbound} is not new -- 
for example, it appears in \cite{H11}, where the proof is based on a slightly weaker version in \cite{EGK91}.  

\begin{theorem} \label{theorem:random-tournaments-lowbound}
Let $T$ be the random tournament of order $n$. Then \textit{a.a.s.}
\begin{equation*}
\vec{\chi}(T)\geq \frac{n}{2\log_2 n + 2}.
\end{equation*}
\end{theorem}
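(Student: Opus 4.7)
The plan is to prove an upper bound on the independence analogue for tournaments, namely the maximum size of an acyclic set, and then invert it. Write $\alpha^\ast(T)$ for the largest cardinality of an acyclic subset of $V(T)$. Since any proper dichromatic coloring partitions $V(T)$ into acyclic classes, we always have
\begin{equation*}
\vec{\chi}(T) \;\geq\; \frac{n}{\alpha^\ast(T)}.
\end{equation*}
Thus it suffices to show that a.a.s.\ $\alpha^\ast(T) \leq 2\log_2 n + 2$.

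To bound $\alpha^\ast(T)$ I would use the first moment method, exploiting the fact that a tournament is acyclic if and only if it is transitive. For each fixed $s$-subset $S \subseteq V(T)$, the sub-tournament $T[S]$ is a uniformly random tournament on $s$ vertices, and the number of transitive (equivalently acyclic) tournaments on a labelled $s$-vertex set is exactly $s!$ (one per linear order). Hence
\begin{equation*}
\mathbb{P}[\,T[S] \text{ is acyclic}\,] \;=\; \frac{s!}{2^{\binom{s}{2}}}.
\end{equation*}
A union bound over all $s$-subsets then yields
\begin{equation*}
\mathbb{P}[\,\alpha^\ast(T) \geq s\,] \;\leq\; \binom{n}{s}\frac{s!}{2^{\binom{s}{2}}} \;\leq\; \frac{n^s}{2^{s(s-1)/2}}.
\end{equation*}

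Now I would set $s = \lfloor 2\log_2 n + 2 \rfloor + 1$, so that $s - 1 > 2\log_2 n$. Taking logarithms base $2$ of the right-hand side gives $s\log_2 n - s(s-1)/2 = s\bigl(\log_2 n - (s-1)/2\bigr) < 0$, and in fact the expression tends to $-\infty$ as $n\to\infty$. Hence a.a.s.\ no set of this size is acyclic, so $\alpha^\ast(T) \leq 2\log_2 n + 2$ a.a.s., from which the desired bound $\vec{\chi}(T) \geq n/(2\log_2 n+2)$ follows.

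I do not foresee any serious obstacle: the only piece of real content is the observation that acyclic sub-tournaments are exactly the transitive ones, so there are just $s!$ of them on $s$ vertices. Everything else is a routine union-bound computation, with the constant $2$ in $2\log_2 n + 2$ being precisely what is needed for $n^s$ to be dominated by $2^{\binom{s}{2}}$.
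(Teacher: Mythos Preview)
Your proposal is correct and follows essentially the same approach as the paper: bound $\vec{\alpha}(T)$ by a first-moment/union-bound argument using the fact that an acyclic sub-tournament on $s$ vertices is transitive (hence there are $s!$ of them), then invoke $\vec{\chi}(T)\ge n/\vec{\alpha}(T)$. The only cosmetic difference is that you handle the integrality of $s$ explicitly by taking $s=\lfloor 2\log_2 n+2\rfloor+1$, whereas the paper plugs in $s=2\log_2 n+2$ directly.
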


\begin{proof}
Let $A$ be any fixed subset of $2\log_2 n + 2$ vertices of $T$. Note that if the subdigraph $T[A]$ induced by $A$ is acyclic then there is 
an ordering of the vertices in $A$ such that all arcs of $T[A]$ go forward with respect to that ordering. Thus, assuming $\vec{\alpha}(T)$ is
the size of a largest acyclic set in $T$, we have:

\begin{eqnarray*}
\PP[\vec{\alpha}(T)\geq 2\log_2 n + 2] &\leq& {n \choose 2\log_2 n + 2} \PP[A \textrm{ is acyclic}] \\
&\leq&  {n \choose 2\log_2 n + 2} (2\log_2 n + 2)! \left(\frac{1}{2}\right)^{2\log_2 n + 2 \choose 2}\\
&\leq& n^{2\log_2 n + 2}\cdot\frac{1}{n^{2\log_2 n + 1}}\cdot\frac{1}{n^2} = o(1).
\end{eqnarray*}
Therefore, \textit{a.a.s.} $\vec{\alpha}(T)\leq 2\log_2 n + 2$. Since $\vec{\chi}(D) \geq \frac{|V(D)|}{\vec{\alpha}(D)}$ for any digraph $D$, the result follows.
\end{proof}
\medskip

Note that Theorem~\ref{theorem:random-tournaments-lowbound} provides a lower bound on the asymptotic value of $\vec{\chi_\ell}(T)$ for every random tournament $T$, 
since the list dichromatic number of every digraph is at least its dichromatic number. 
We now prove an upper bound on the asymptotic value of $\vec{\chi_\ell}(T)$.

Here and further, we will need the so-called Extended Janson Inequality (see \textit{e.g.}~\cite{AS04}), which we recall formally now.
Let $\Omega$ be a finite universal set and let $R \subseteq \Omega$ be obtained as follows. For every $r \in \Omega$, we independently add $r$ to $R$
with probability $p_r$. Given a finite set $I$ of indexes, let $A_i$ be subsets of $\Omega$ for $i \in I$, and denote by $B_i$ the event that $A_i \subseteq R$.
Let $X_i$ be the indicator random variable for $B_i$, \textit{i.e.} $X_i=1$ if $B_i$ occurs, and $X_i=0$ otherwise. Set $X = \sum_{i \in I} X_i$.
For any two $i,j \in I$, we write $i \sim j$ if $i \neq j$ and $A_i \cap A_j \neq \emptyset$. We define $$\Delta=\sum_{i \sim j} \PP[B_i \cap B_j],$$ 
and finally set $$\mu = \EE[X] = \sum_{i \in I} \PP[B_i].$$ We are now ready to state the two Janson Inequalities.

\begin{theorem}[Janson Inequality] \label{theorem:janson}
Let $B_i$ for $i \in I$, and $\Delta$ and $\mu$ be as above. Then $$\PP[X=0]=\PP[\cap_{i \in I} \bar{B_i}] \leq e^{-\mu+\frac{\Delta}{2}}.$$
\end{theorem}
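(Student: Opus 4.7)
The plan is to follow the classical Boppana--Spencer approach, which combines the chain rule with the FKG/Harris correlation inequality. First I would enumerate $I = \{1, \ldots, m\}$ arbitrarily and decompose
\[
\PP[X = 0] = \prod_{i=1}^m \PP[\bar{B_i} \mid \bar{B_1} \cap \cdots \cap \bar{B_{i-1}}].
\]
Taking logarithms and using $\ln(1-x) \leq -x$ term by term, it suffices to prove the lower bound $\sum_{i=1}^m \PP[B_i \mid \bar{B_1} \cap \cdots \cap \bar{B_{i-1}}] \geq \mu - \Delta/2$, which I would then exponentiate to recover the stated bound.

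For each $i$, I would partition the earlier indices into the ``dependent'' set $D_i = \{j < i : j \sim i\}$ and the ``independent'' set $E_i = \{j < i : j \neq i,\ A_j \cap A_i = \emptyset\}$, and set $\mathcal{D} = \bigcap_{j \in D_i} \bar{B_j}$ and $\mathcal{E} = \bigcap_{j \in E_i} \bar{B_j}$. The key structural remark is that $B_i$ depends only on elements of $A_i$, whereas $\mathcal{E}$ depends only on elements outside $A_i$; on the product space this makes $B_i$ genuinely independent of $\mathcal{E}$, so $\PP[B_i \mid \mathcal{E}] = \PP[B_i]$. Writing
\[
\PP[B_i \mid \mathcal{D} \cap \mathcal{E}] = \frac{\PP[B_i \cap \mathcal{D} \mid \mathcal{E}]}{\PP[\mathcal{D} \mid \mathcal{E}]} \geq \PP[B_i \mid \mathcal{E}] - \PP[B_i \cap \overline{\mathcal{D}} \mid \mathcal{E}],
\]
I would bound $\PP[B_i \cap \overline{\mathcal{D}} \mid \mathcal{E}] \leq \sum_{j \in D_i} \PP[B_i \cap B_j \mid \mathcal{E}]$ by the union bound, and apply FKG a second time --- namely, $B_i \cap B_j$ is an increasing event while $\mathcal{E}$ is decreasing --- to remove the conditioning and obtain the upper bound $\sum_{j \in D_i} \PP[B_i \cap B_j]$. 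Combined, these steps give the clean inequality $\PP[B_i \mid \bar{B_1} \cap \cdots \cap \bar{B_{i-1}}] \geq \PP[B_i] - \sum_{j \in D_i} \PP[B_i \cap B_j]$.

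Finally I would sum over $i$: the main term contributes $\sum_i \PP[B_i] = \mu$, and the correction $\sum_i \sum_{j \in D_i} \PP[B_i \cap B_j]$ counts each unordered pair $\{i,j\}$ with $i \sim j$ exactly once (the smaller index lies in $D_i$ of the larger), yielding exactly $\Delta/2$. Plugging this into the logarithmic inequality produces $\ln \PP[X=0] \leq -\mu + \Delta/2$, which is the claim. The main obstacle is the conditional-probability manipulation of the second paragraph: one must cleanly separate the conditioning into an independent part on $E_i$ (so that $\PP[B_i \mid \mathcal{E}]$ factors exactly) and a correlated part on $D_i$ (which must be controlled by an FKG-enhanced union bound), while carefully tracking the direction of the correlation inequalities when passing between the increasing events $B_i$, $B_i \cap B_j$ and the decreasing event $\mathcal{E}$.
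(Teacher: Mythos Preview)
Your outline is the standard Boppana--Spencer argument and is correct as written; the chain-rule factorization, the split into $D_i$ and $E_i$, the exact independence of $B_i$ from $\mathcal{E}$, the FKG step $\PP[B_i \cap B_j \mid \mathcal{E}] \leq \PP[B_i \cap B_j]$, and the final bookkeeping giving $\mu - \Delta/2$ are all sound.

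However, note that the paper does not actually prove this theorem. It is stated as a quoted background result (together with the Extended Janson Inequality) with a reference to Alon--Spencer~\cite{AS04}, and is then applied as a black box in the analysis of random tournaments. So there is no ``paper's own proof'' to compare against: you have supplied a proof where the paper simply cites one. Your argument is precisely the one found in the cited reference, so in that sense there is no divergence of approach.
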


\begin{theorem}[Extended Janson Inequality]
Under the assumption of Theorem~\ref{theorem:janson} and the further assumption that $\Delta \geq \mu$, 
$$\PP[X=0] = \PP[\cap_{i \in I} \bar{B_i}] \leq e^{-\frac{\mu^2}{2\Delta}}.$$
\end{theorem}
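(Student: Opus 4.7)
The plan is to deduce the Extended Janson Inequality from the ordinary Janson Inequality (Theorem~\ref{theorem:janson}) by a probabilistic averaging argument over which events we keep. The driving observation is that the conclusion of Janson's Inequality remains valid if we restrict to any sub-family of events, and the freedom to optimize that sub-family provides exactly enough slack to replace the exponent $-\mu+\Delta/2$ by $-\mu^2/(2\Delta)$ once $\Delta \geq \mu$.

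First I would observe that for every $S \subseteq I$,
$$\PP[X=0]=\PP[\cap_{i \in I}\bar{B_i}] \leq \PP[\cap_{i \in S}\bar{B_i}],$$
since further intersecting only shrinks the event. Applying the ordinary Janson Inequality to the sub-family $\{B_i : i \in S\}$ therefore yields
$$\PP[X=0] \leq \exp\!\left(-\mu_S+\tfrac{1}{2}\Delta_S\right),$$
where $\mu_S=\sum_{i \in S}\PP[B_i]$ and $\Delta_S=\sum_{i,j \in S,\, i \sim j}\PP[B_i \cap B_j]$. The problem thus reduces to choosing $S$ so as to make $\mu_S-\Delta_S/2$ as large as possible.

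Next I would select $S$ probabilistically: for a parameter $p \in [0,1]$ to be fixed later, include each $i \in I$ in $S$ independently with probability $p$. Linearity of expectation gives $\EE[\mu_S]=p\mu$ and $\EE[\Delta_S]=p^{2}\Delta$, so some deterministic $S$ achieves $\mu_S-\Delta_S/2 \geq p\mu-p^{2}\Delta/2$. Consequently,
$$\PP[X=0] \leq \exp\!\left(-p\mu+\tfrac{1}{2}p^{2}\Delta\right) \qquad \text{for every } p \in [0,1].$$

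Finally I would optimize in $p$. The quadratic $p \mapsto p\mu-p^{2}\Delta/2$ is maximized over $\mathbb{R}$ at $p^{\star}=\mu/\Delta$, with value $\mu^{2}/(2\Delta)$. The only (minor) obstacle is checking that this optimizer lies in the admissible interval $[0,1]$, and this is exactly where the hypothesis $\Delta \geq \mu$ is used: it guarantees $p^{\star}\leq 1$. Plugging $p^{\star}$ into the bound yields $\PP[X=0] \leq e^{-\mu^{2}/(2\Delta)}$, as desired.
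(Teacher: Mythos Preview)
Your argument is correct and is in fact the standard derivation of the Extended Janson Inequality from the ordinary one (see, e.g., Alon and Spencer~\cite{AS04}). Note, however, that the paper does not supply its own proof of this statement: it is quoted as a known tool, with reference to~\cite{AS04}, so there is no in-paper proof to compare against. Your random-subfamily averaging followed by optimization at $p^\star=\mu/\Delta$ is exactly the textbook route; the hypothesis $\Delta\ge\mu$ is used precisely where you indicate, to ensure $p^\star\in[0,1]$.
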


\begin{theorem} \label{theorem:random-tournaments-upbound}
Let $T$ be the random tournament of order $n$. Then \textit{a.a.s.}
\begin{equation*}
\vec{\chi_\ell}(T)\leq \frac{n}{2\log_2 n}(1+o(1)).
\end{equation*}
\end{theorem}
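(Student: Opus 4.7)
The plan is to mimic, at the structural level, the greedy-plus-Hall argument of Theorem~\ref{theorem:all-tournaments-upbound}, replacing its only probabilistic ingredient---the Erd\H{o}s--Moser bound of Lemma~\ref{lemma:tournament-acyclic-size}, which yields acyclic sets of size only $\log_2 n$---with a stronger ``random-tournament'' variant that produces acyclic sets of size essentially $2\log_2 n$ inside every sufficiently large induced subtournament of $T$. This doubling is precisely what turns the general bound $n/\log_2 n$ into the target $n/(2\log_2 n)$.

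\medskip
\noindent\textbf{Step 1 (probabilistic lemma).} I would first prove that a.a.s.\ every $S\subseteq V(T)$ with $|S|\geq n/(\log_2 n)^{3}$ contains an acyclic set of size at least $k:=\lfloor 2\log_2 n - 6\log_2\log_2 n\rfloor$. Fix such $S$ with $|S|=m$, and apply the Extended Janson Inequality to the events $B_{\sigma}$ indexed by ordered $k$-tuples $\sigma$ of distinct vertices of $S$, where $B_{\sigma}$ asserts that every one of the $\binom{k}{2}$ arcs of $T[\sigma]$ is oriented forward along $\sigma$. Then $\PP[B_{\sigma}]=2^{-\binom{k}{2}}$, so $\mu=(m)_{k}\,2^{-\binom{k}{2}}$; splitting $\Delta$ according to the number $j\in\{2,\dots,k-1\}$ of vertices two $k$-tuples have in common and controlling each layer yields $\mu^{2}/(2\Delta)=\omega(n)$. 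A union bound over the at most $2^{n}$ choices of $S$ then absorbs the per-set failure probability.

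\medskip
\noindent\textbf{Step 2 (greedy elimination).} Given an arbitrary $k^{*}$-list-assignment $L$ to $T$ with $k^{*}:=\bigl\lceil\tfrac{n}{2\log_{2} n}\bigl(1+\tfrac{1}{\log_{2}\log_{2}\log_{2} n}\bigr)\bigr\rceil$, iterate the following: while some color $c$ appears on the current list of at least $n/(\log_{2} n)^{3}$ uncolored vertices, let $S_{c}$ be that set, extract by Step~1 an acyclic subset $A_{c}\subseteq S_{c}$ of size at least $k$, color every vertex of $A_{c}$ with $c$, and delete $c$ from all remaining lists. Each round colors $(2-o(1))\log_{2} n$ vertices with one color, so the loop terminates after at most $n/((2-o(1))\log_{2} n)$ rounds.

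\medskip
\noindent\textbf{Step 3 (Hall on the residue).} Let $R$ be the set of vertices still uncolored at termination, and $C$ the set of colors still on some list. By the halting condition each $c\in C$ appears on fewer than $n/(\log_{2} n)^{3}$ lists, while a direct arithmetic using the $1/\log_{2}\log_{2}\log_{2} n$ slack built into $k^{*}$ shows that every $r\in R$ still has at least $n/(\log_{2} n)^{3}$ available colors. The same double-counting as in the proof of Theorem~\ref{theorem:all-tournaments-upbound} then verifies Hall's condition on the bipartite incidence graph between $R$ and $C$; a resulting matching colors the residue with pairwise distinct colors, which trivially cannot create any monochromatic cycle, completing the $L$-coloring.

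\medskip
\noindent\textbf{Main obstacle.} The heart of the argument is Step~1: the Extended Janson bound $\exp(-\mu^{2}/(2\Delta))$ must beat the $2^{n}$ subsets in the union bound, so we need the quantitative conclusion $\mu^{2}/\Delta=\omega(n)$ rather than the weaker $\mu^{2}/\Delta\to\infty$ one usually settles for. This forces a delicate layer-by-layer estimate of $\Delta$ according to the number of common vertices between two ordered $k$-tuples, in the same spirit as the classical random-tournament computations referenced in~\cite{EGK91, H11}; once this uniform acyclic-subset lemma is in place, Steps~2 and~3 are essentially parameter-matched copies of the argument of Theorem~\ref{theorem:all-tournaments-upbound}.
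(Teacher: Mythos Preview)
Your proposal is essentially the paper's own proof: the paper likewise proves, via the Extended Janson Inequality, a lemma asserting that a.a.s.\ every $m$-subset of $T$ (with $m=\lfloor n/(\log_2 n)^2\rfloor$) contains an acyclic set of size $k\sim 2\log_2 m$, obtaining $\mu^{2}/(2\Delta)=\Theta(m^{2}/(\ln m)^{4})$ so that $e^{-\mu^{2}/(2\Delta)}$ beats the $2^{n}=2^{m^{1+o(1)}}$ union bound, and then finishes with exactly your greedy-plus-Hall argument with the same $1/\log_{2}\log_{2}\log_{2} n$ slack. The only cosmetic differences are your use of ordered $k$-tuples rather than unordered $k$-sets in the Janson setup and your threshold $n/(\log_{2} n)^{3}$ instead of $n/(\log_{2} n)^{2}$; both choices work and lead to the same computation.
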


\begin{proof}
The proof is an adaptation of an argument used in~\cite{AS04} for proving a bound on the chromatic number of random graphs, which uses the Extended Janson Inequality. 

In the current proof, we will write $a\thicksim b$ if $a/b$ tends to 1 as $n$ tends to infinity. Let $k\thicksim 2\log_2 n$, the exact value of $k$ being indicated later. For each $k$-set $S$ of vertices (\textit{i.e.} $S$ has size $k$), let $A_S$ be the event ``$S$ is an acyclic set'' and $X_S$ be the corresponding indicator random variable. For any additional $k$-set $S'$, we will write $S\thicksim S'$ if $|S \cap S'|\geq 2$. 

Set
\begin{equation*}
X = \sum_{|S|=k}X_S.
\end{equation*}
Note that $S$ is acyclic if and only if there is an ordering of the vertices in $S$ such that all arcs in $S$ go forward. Then $\PP[A_S]=\frac{k!}{2^{k \choose 2}}$. Hence
\begin{equation*}
\EE[X] = f(k) = {n \choose k}2^{-{k \choose 2}}k!.
\end{equation*}
Then, $\vec{\alpha}(D)\geq k$ if and only if $X>0$. We have
\begin{equation*}
\Delta = \sum_{S\thicksim S'}\PP[A_S\cap A_{S'}] = \sum_{S}\PP[A_S]\sum_{S'\thicksim S}\PP[A_{S'}|A_S].
\end{equation*}
Note that the inner summation is independent of $S$. Now we set
\begin{equation*}
\Delta^* = \sum_{S'\thicksim S}\PP[A_{S'}|A_S]
\end{equation*}
where $S$ is any fixed $k$-set. Then
\begin{equation*}
\Delta = \sum_{S}\PP[A_S]\Delta^* = \Delta^*\sum_{S}\PP[A_S] = \Delta^* \EE[X].
\end{equation*}
If we assume that $S$ is acyclic and $|S \cap S'| = i$, then the number of ways to order the vertices of $S'$ in an acyclic way is ${k \choose i}(k-i)! = k!/i!$. 
From the definition of $\Delta^*$, we have
\begin{equation*}
\Delta^* = \sum_{i=2}^{k-1}{k \choose i}{n-k \choose k-i}2^{{i \choose 2}-{k \choose 2}}\frac{k!}{i!}
\end{equation*}
and so
\begin{equation*}
\frac{\Delta^*}{\EE[X]} = \sum_{i=2}^{k-1}g(i)
\end{equation*}
where we set
\begin{equation*}
g(i) = \frac{{k \choose i}{n-k \choose k-i}2^{i \choose 2}}{{n \choose k}i!}.
\end{equation*}

Let $k_0=k_0(n)$ be the value for which:
\begin{equation*}
f(k_0-1)>1>f(k_0).
\end{equation*}
Note that $k_0\thicksim 2\log_2 n$. We have:
\begin{equation*}
\frac{f(k+1)}{f(k)} \thicksim n2^{-k} = n^{-1+o(1)}.
\end{equation*}
Set $k = k(n) = k_0(n)-4$. Then $f(k)>n^{3+o(1)}$. By computation, $g(2)\thicksim k^4/n^2$ and $g(k-1)\thicksim 2k^2n2^{-k}/\EE[X]$ are the dominating terms. 
In our instance, we have $\EE[X] = f(k) > n^{3+o(1)}$ and $2^{-k}=n^{-2+o(1)}$. So $g(2)$ dominates and
\begin{equation*}
\frac{\Delta}{\EE[X]^2} = \frac{\Delta^*}{\EE[X]} \thicksim \frac{k^4}{n^2}. 
\end{equation*}
Now, since $k=\Theta(\ln n)$, by the Extended Janson Inequality we deduce
\begin{equation*}
\PP[\vec{\alpha}(T)<k] = \PP[X=0] < e^{-\EE[X]^2/2\Delta} = e^{-\Theta(n^2/(\ln n)^4)} = e^{-n^{2+o(1)}}.
\end{equation*}

\medskip

In order to finish the proof, we need the following key lemma:

\begin{lemma} \label{L:2:2}
Let $m=\left \lfloor \frac{n}{(\log_2 n)^2} \right \rfloor$. \textit{A.a.s.} every $m$-set of vertices of $T$ contains an acyclic $k$-set, where $k = k(m) = k_0(m)-4$ as above.
\end{lemma}

\begin{proof}
For any $m$-set $S$, the induced subdigraph $T[S]$ has the distribution of the random tournament of order $m$. By the above argument concerning the Extended Janson Inequality, we have
\begin{equation*}
\PP[\vec{\alpha}(T[S])<k] <  e^{-m^{2+o(1)}}.
\end{equation*}
There are ${n\choose m}< 2^n = 2^{m^{1+o(1)}}$ such sets $S$. Hence
\begin{equation*}
\PP[\vec{\alpha}(T[S])<k \textrm{ for some set } S] <  2^{m^{1+o(1)}}e^{-m^{2+o(1)}} = o(1),
\end{equation*}
concluding the proof.
\end{proof}
\medskip

We now use the same argument as in the proof of Theorem \ref{theorem:all-tournaments-upbound}. Suppose $T$ is given an $\frac{n}{2\log_2 n}(1+o(1))$-list-assignment. Let $m= \lfloor \frac{n}{(\log_2 n)^2}  \rfloor$. If there exists a color appearing in the list of at least $m$ vertices, then we can find an acyclic set $S$ among these vertices and assign that color to every vertex of $S$. We repeat this process as long as possible until no such color exists. So the process ends with every remaining color appearing in the list of at most $m$ vertices. Since
\begin{equation*}
\frac{n}{2\log_2 n}(1+o(1)) - \frac{n}{2\log_2 n(1+o(1))} \geq \left \lfloor \frac{n}{(\log_2 n)^2} \right \rfloor ,
\end{equation*}
then we can assign to each remaining uncolored vertex a color from its list so that we obtain an acyclic vertex-coloring.
This completes the proof.
\end{proof}
\medskip

Theorems~\ref{theorem:random-tournaments-lowbound} and~\ref{theorem:random-tournaments-upbound} yield the following direct corollary.

\begin{theorem} \label{theorem:random-tournaments-asym}
Let $T$ be the random tournament of order $n$. Then \textit{a.a.s.}
\begin{equation*}
\vec{\chi}(T) \thicksim \vec{\chi_\ell}(T) \thicksim \frac{n}{2\log_2 n}.
\end{equation*}
\end{theorem}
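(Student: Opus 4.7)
The plan is to obtain Theorem~\ref{theorem:random-tournaments-asym} as a direct sandwich argument, combining the two immediately preceding results with the trivial monotonicity relation between the dichromatic number and its list variant.

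First I would record the elementary fact that $\vec{\chi}(D) \leq \vec{\chi_\ell}(D)$ for every digraph $D$: if $\vec{\chi_\ell}(D) = k$, then applying the definition to the constant list-assignment $L(v) = \{1, \dots, k\}$ produces an acyclic $k$-coloring of $D$, whence $\vec{\chi}(D) \leq k$. In particular this inequality holds for the random tournament $T$ on every sample point, so any a.a.s.\ upper bound on $\vec{\chi_\ell}(T)$ transfers to $\vec{\chi}(T)$, and any a.a.s.\ lower bound on $\vec{\chi}(T)$ transfers to $\vec{\chi_\ell}(T)$.

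Next I would rewrite the conclusion of Theorem~\ref{theorem:random-tournaments-lowbound} as
\begin{equation*}
\vec{\chi}(T) \;\geq\; \frac{n}{2\log_2 n + 2} \;=\; \frac{n}{2\log_2 n}\bigl(1 - o(1)\bigr) \qquad \text{a.a.s.,}
\end{equation*}
and take Theorem~\ref{theorem:random-tournaments-upbound} as is:
\begin{equation*}
\vec{\chi_\ell}(T) \;\leq\; \frac{n}{2\log_2 n}\bigl(1 + o(1)\bigr) \qquad \text{a.a.s.}
\end{equation*}
Since the intersection of two a.a.s.\ events is a.a.s., both bounds hold simultaneously with probability tending to $1$. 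Combined with $\vec{\chi}(T) \leq \vec{\chi_\ell}(T)$, this traps both parameters inside the interval $\bigl[(1-o(1))\tfrac{n}{2\log_2 n},\, (1+o(1))\tfrac{n}{2\log_2 n}\bigr]$, which is precisely the asymptotic equivalence $\vec{\chi}(T) \thicksim \vec{\chi_\ell}(T) \thicksim \tfrac{n}{2\log_2 n}$ claimed in the theorem.

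There is no genuine obstacle here: all the substantive probabilistic work (the Erd\H{o}s--Moser style lower bound on $\vec{\alpha}(T)$ via the first moment, and the Extended Janson Inequality together with Lemma~\ref{L:2:2} for the matching upper bound on $\vec{\chi_\ell}(T)$) has already been carried out in the two previous theorems. This corollary only requires the soft monotonicity $\vec{\chi} \leq \vec{\chi_\ell}$ and the observation that $2\log_2 n + 2 \sim 2\log_2 n$.
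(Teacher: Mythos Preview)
Your proposal is correct and matches the paper's approach exactly: the paper presents this theorem as a direct corollary of Theorems~\ref{theorem:random-tournaments-lowbound} and~\ref{theorem:random-tournaments-upbound} with no further argument, and your sandwich via $\vec{\chi}(T) \leq \vec{\chi_\ell}(T)$ is precisely the intended (and only) step.
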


\section{Random digraphs} \label{section:random}

We here focus on random digraphs, mainly confirming the equality $\vec{\chi} = \vec{\chi_\ell}$ for
the following model $D(n,p)$. An $n$-vertex digraph $D$ of $D(n,p)$ is constructed by connecting $n$ vertices
randomly as follows. For every two vertices $u$ and $v$ of $D$, the probability that $u$ and $v$ are connected
by an arc is $2p$. In case $u$ and $v$ are chosen to be joined by an arc, the direction of that arc is chosen 
with equal probability (\textit{i.e.} $1/2$ for $(u,v)$ and $1/2$ for $(v,u)$). All directions are chosen independently
for different pairs of vertices to be joined. Note that $D(n,1/2)$ generates random tournaments.

Before stating our main result, let us recall two results. The first result, due to Spencer and Subramanian~\cite{SS08},
gives a bound on the maximum size $\vec{\alpha}(D)$ of an induced acyclic subgraph in some given digraph $D$.

\begin{theorem}[Spencer and Subramanian] \label{T:1:2}
Let $D \in D(n,p)$ and $w=np$. There is a sufficiently large constant $C$ such that if $p$ satisfies $w\geq C$, then \textit{a.a.s.}
\begin{equation}
\vec{\alpha}(D) = \left(\frac{2\ln w}{\ln q}\right)(1\pm o(1)),
\end{equation}
where $q = (1-p)^{-1}$.
\end{theorem}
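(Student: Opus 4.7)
The plan is to establish matching upper and lower bounds on $\vec{\alpha}(D)$ a.a.s., each of the form $(1 \pm o(1)) \cdot \frac{2\ln w}{\ln q}$, and then read off the theorem. Throughout, $X_k$ denotes the number of acyclic $k$-subsets of $D$ and, for a given $k$-set $S$, $A_S$ denotes the event that $S$ is acyclic. A basic building block is the identity that, for any fixed linear ordering $\sigma$ of $S$, the probability that $\sigma$ is a topological order of $D[S]$ is $(1-p)^{{k \choose 2}} = q^{-{k \choose 2}}$, because each of the ${k \choose 2}$ pairs must independently avoid a backward arc (probability $1-p$ each, by the model).

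For the lower bound I would essentially replay the Extended Janson argument used for Theorem~\ref{theorem:random-tournaments-upbound}. Set $k = (1-\varepsilon) \frac{2\ln w}{\ln q}$ and apply Janson to the indicator variables of the events $A_S$. Computing $\mu = \EE[X_k] = {n \choose k}\PP[A_S]$ and the pair sum $\Delta = \sum_{S\thicksim S'}\PP[A_S \cap A_{S'}]$ by splitting according to $|S \cap S'| = i$ reduces the analysis to a sum $\frac{\Delta}{\mu^2} = \sum_{i=2}^{k-1}g(i)$ of exactly the same shape as in the random tournaments proof; a direct check shows the $i=2$ term dominates and $\frac{\Delta}{\mu^2} \to 0$ in the chosen range of $k$, so Janson yields $\PP[X_k = 0] \leq e^{-\mu^2/(2\Delta)} = o(1)$.

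For the upper bound I would use the first moment method. With $k = (1+\varepsilon)\frac{2\ln w}{\ln q}$, union-bounding $\PP[A_S]$ over the $k!$ topological orderings gives $\PP[A_S] \leq k!\,q^{-{k \choose 2}}$ and hence $\EE[X_k] \leq {n \choose k} k! \, q^{-{k \choose 2}}$. The subtlety is that this naive bound reaches only the threshold $\frac{2\ln n}{\ln q}$, which coincides with $\frac{2\ln w}{\ln q}$ when $p$ is not too small but is wasteful in the sparse regime $w = O(1)$, because sparse DAGs admit astronomically many linear extensions and are therefore massively overcounted. To close the gap I would decompose $\PP[A_S] = (1-2p)^{{k \choose 2}} a(k, r)$ with $r = p/(1-2p)$, where $a(k,r) = \sum_{H} r^{|E(H)|}$ ranges over DAGs $H$ on $k$ labelled vertices, and estimate $a(k,r)$ by conditioning on the underlying random graph $G(n, 2p)$ and bounding the number of acyclic orientations of the conditioned subgraph via the Manber--Tompa inequality (already employed in the proof of Lemma~\ref{L:1:1}). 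This should replace the loose prefactor $k!$ by something whose logarithm scales like $k \log k$ rather than $k \log(n/k)$ and push the threshold down from $\frac{2\ln n}{\ln q}$ to $\frac{2 \ln w}{\ln q}$.

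The main obstacle is precisely this refined first moment step. The Janson calculation on the lower side is essentially routine bookkeeping once Theorem~\ref{theorem:random-tournaments-upbound} is in hand, but controlling the generating function $a(k,r)$ sharply enough to recover $\ln w$ rather than $\ln n$ in the exponent requires a careful account of the overcounting by linear extensions in sparse DAGs. Handling this delicate combinatorial count, most likely via Manber--Tompa after conditioning on the underlying $G(n, 2p)$, is where I expect the bulk of the technical work to lie.
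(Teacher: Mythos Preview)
The paper does not actually prove Theorem~\ref{T:1:2}. It is quoted as a known result of Spencer and Subramanian~\cite{SS08} and is used as a black box in Section~\ref{section:random}; no argument for it appears anywhere in the text. So there is no ``paper's own proof'' to compare your proposal against.

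That said, your sketch is a reasonable outline of how such a result is established. The lower bound via the Extended Janson Inequality is indeed routine and parallels the computation in Theorem~\ref{theorem:random-tournaments-upbound} almost verbatim, with $(1-p)$ replacing $1/2$. You are also right to flag that the naive first-moment bound $\EE[X_k]\le\binom{n}{k}k!\,q^{-\binom{k}{2}}$ only yields the threshold $\frac{2\ln n}{\ln q}$, which overshoots $\frac{2\ln w}{\ln q}$ in the sparse regime where $w=np$ is bounded. Your proposed remedy --- condition on the underlying undirected graph $G(k,2p)$ and replace the crude $k!$ overcount of linear extensions by a degree-based bound on acyclic orientations --- is exactly the kind of refinement needed, and is in the spirit of what Spencer and Subramanian do. One small correction: the decomposition you wrote, $\PP[A_S]=(1-2p)^{\binom{k}{2}}a(k,r)$ with $r=p/(1-2p)$, tacitly assumes $p<1/2$, which is fine under the hypothesis $np\le n/4$ used later in the paper but worth stating. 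Beyond that, the ``main obstacle'' you identify is genuinely where the work sits; the rest is bookkeeping.
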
 

The second result, due to Alon, Krivelevich and Sudakov~\cite{AKS99}, yields a bound on the list chromatic number of the random graph $G(n,p)$. 

\begin{theorem}[Alon, Krivelevich and Sudakov] \label{T:1:3}
If $p=p(n)$ satisfies $2<np\leq n/2$, then \textit{a.a.s.}
\begin{equation}
\chi_\ell(G(n,p))=\Theta\left(\frac{np}{\ln(np)}\right).
\end{equation}
\end{theorem}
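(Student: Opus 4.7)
The plan is to prove Theorem~\ref{T:1:3} in two parts, mirroring the two-phase argument developed above for random tournaments in Theorems~\ref{theorem:random-tournaments-lowbound} and~\ref{theorem:random-tournaments-upbound}. The lower bound will follow from classical first-moment estimates, while the upper bound will require a greedy list-coloring procedure driven by a local Janson-type lemma.

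For the lower bound $\chi_\ell(G(n,p)) = \Omega(np/\ln(np))$, I would use the chain of inequalities $\chi_\ell(G) \geq \chi(G) \geq n/\alpha(G)$ valid for any $n$-vertex graph, and then bound $\alpha(G(n,p))$ from above. Setting $k = (2+\varepsilon)\ln(np)/p$, the expected number of independent $k$-sets is
\begin{equation*}
\binom{n}{k}(1-p)^{\binom{k}{2}} \leq (en/k)^{k}\, e^{-pk(k-1)/2} = o(1),
\end{equation*}
so by Markov's inequality, a.a.s. $\alpha(G(n,p)) = O(\ln(np)/p)$, which yields the desired lower bound.

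For the upper bound $\chi_\ell(G(n,p)) = O(np/\ln(np))$, I would adapt the two-phase strategy used in the proof of Theorem~\ref{theorem:random-tournaments-upbound}. Fix $k \sim 2\ln(np)/p\,(1-o(1))$ and a threshold $m$ chosen appropriately. The key preliminary ingredient is a local independence lemma (the $G(n,p)$ analogue of Lemma~\ref{L:2:2}) stating that a.a.s. every subset $S \subseteq V(G(n,p))$ with $|S| \geq m$ contains an independent set of size $k$. Assuming this, given any list assignment with lists of size $(1+o(1))np/\ln(np)$, one runs the greedy phase: as long as some color $c$ appears in the lists of at least $m$ uncolored vertices, extract an independent $k$-subset among them via the lemma and assign color $c$ to each. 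When this phase halts, every surviving color appears in at most $m$ lists; since the total number of greedy steps is at most $n/k$, each uncolored vertex still has many colors in its list, and a double-counting argument exactly as in the Hall-type finishing step of Theorem~\ref{theorem:all-tournaments-upbound} completes the coloring by a matching.

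The main obstacle is proving the local independence lemma for $G(n,p)$. I would apply the Extended Janson Inequality to the random variable $X$ counting independent $k$-subsets in $G(n,p)[S]$: one has $\mu = \EE[X] = \binom{|S|}{k}(1-p)^{\binom{k}{2}}$, and $\Delta$ is controlled by summing $\PP[B_A \cap B_{A'}]$ over pairs of $k$-subsets $A, A'$ sharing $i \geq 2$ vertices, with the $i=2$ term dominating in the relevant regime. Choosing $k$ slightly below the natural threshold makes $\mu$ super-polynomially large in $|S|$, so the Janson bound $\PP[X=0] \leq e^{-\mu^{2}/(2\Delta)}$ beats the $\binom{n}{|S|} \leq 2^{n}$ union bound over the choices of $S$. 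The delicate point, compared with the tournament case, is that $p$ varies with $n$ across the whole range $2 < np \leq n/2$, so the asymptotics of $\ln(np)$ and of $q = (1-p)^{-1}$ (as used in Theorem~\ref{T:1:2}) must be tracked carefully to handle both the sparse and the dense regimes uniformly.
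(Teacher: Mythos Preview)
The paper does not prove Theorem~\ref{T:1:3} at all: it is quoted as a result of Alon, Krivelevich and Sudakov~\cite{AKS99} and used as a black box in the proof of the subsequent theorem on $D(n,p)$. There is therefore nothing in the paper to compare your proposal against.

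That said, your sketch is a reasonable outline of how the Alon--Krivelevich--Sudakov result is actually proved in~\cite{AKS99}: the lower bound via $\chi_\ell \geq \chi \geq n/\alpha$ together with a first-moment bound on $\alpha(G(n,p))$, and the upper bound via a Janson-type ``every $m$-set contains a large independent set'' lemma followed by the greedy-plus-Hall two-phase coloring. If you were asked to supply a proof here, the main technical work you have not yet carried out is exactly the part you flag as delicate: making the Janson computation and the choice of $m$ and $k$ uniform over the full range $2 < np \leq n/2$, where the relationship between $p$, $\ln(np)$, and $(1-p)^{-1}$ changes qualitatively between the sparse and dense regimes. But for the purposes of this paper, no proof is expected.
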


We are now ready to prove our main result on random digraphs.

\begin{theorem}
Let $D \in D(n,p)$. There is a sufficiently large constant $C$ such that if $p=p(n)$ satisfies $C\leq np\leq n/4$, then \textit{a.a.s.}
\begin{equation}
\vec{\chi}(D) = \Theta \left(\frac{n\ln q}{\ln(np)}\right) \textrm{ and ~} \vec{\chi_\ell}(D) = \Theta \left(\frac{n\ln q}{\ln(np)}\right),
\end{equation}
where $q = (1-p)^{-1}$.
\end{theorem}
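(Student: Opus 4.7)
My plan is to deduce both asymptotic identities by combining Theorems~\ref{T:1:2} and~\ref{T:1:3} via three elementary observations: $\vec{\chi}(D)\le\vec{\chi_\ell}(D)$, $\vec{\chi}(D)\ge n/\vec{\alpha}(D)$, and $\vec{\chi_\ell}(D)\le\chi_\ell(G)$, where $G$ denotes the undirected graph underlying $D$ (obtained by forgetting arc orientations). The third inequality holds pointwise because every independent set of $G$ is an acyclic set of $D$, so any proper $L$-coloring of $G$ is automatically an $L$-coloring of $D$. Consequently, it suffices to prove the lower bound for $\vec{\chi}(D)$ and the upper bound for $\vec{\chi_\ell}(D)$; the remaining two bounds then follow for free.

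For the lower bound, I would apply Theorem~\ref{T:1:2} directly: a.a.s.\ $\vec{\alpha}(D)=(2\ln(np)/\ln q)(1+o(1))$, so
$$\vec{\chi}(D)\ge\frac{n}{\vec{\alpha}(D)}=\frac{n\ln q}{2\ln(np)}(1-o(1))=\Omega\!\left(\frac{n\ln q}{\ln(np)}\right).$$

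For the upper bound, the key observation is that in the $D(n,p)$ model each unordered pair of vertices is joined by some arc with probability $2p$, so $G$ is distributed as $G(n,2p)$. The hypothesis $C\le np\le n/4$ translates to $2<2np\le n/2$ for $C$ sufficiently large, which places us precisely in the regime where Theorem~\ref{T:1:3} applies; it then yields
$$\vec{\chi_\ell}(D)\le\chi_\ell(G(n,2p))=\Theta\!\left(\frac{np}{\ln(np)}\right)\quad\text{a.a.s.}$$
To reconcile this with the target expression, I would use the expansion $\ln q=-\ln(1-p)=p+p^2/2+\cdots$, which gives $p\le\ln q\le p/(1-p)\le 4p/3$ throughout $p\in(0,1/4]$; hence $\ln q=\Theta(p)$ and the two expressions $np/\ln(np)$ and $n\ln q/\ln(np)$ agree up to a constant factor.

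I do not foresee any substantive obstacle: the proof is essentially a composition of the two quoted theorems with the elementary inequalities above. The main care needed is parameter bookkeeping---confirming that the $D(n,p)$ hypothesis $C\le np\le n/4$ implies the $G(n,2p)$ hypothesis of Theorem~\ref{T:1:3} for $C$ large enough, and that $\ln q$ and $p$ are comparable up to constants on the relevant range of $p$.
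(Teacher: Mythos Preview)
Your proposal is correct and follows essentially the same approach as the paper: both combine the Spencer--Subramanian bound on $\vec{\alpha}(D)$ for the lower bound with the Alon--Krivelevich--Sudakov bound on $\chi_\ell(G(n,2p))$ for the upper bound, linked via the inequalities $\vec{\chi}(D)\le\vec{\chi_\ell}(D)\le\chi_\ell(G)$ and $\vec{\chi}(D)\ge n/\vec{\alpha}(D)$. Your treatment is in fact slightly more careful than the paper's in verifying the hypothesis of Theorem~\ref{T:1:3} and in bounding $\ln q$ by $\Theta(p)$ on $(0,1/4]$.
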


\begin{proof}
To begin with, from Theorem~\ref{T:1:2} we know that
\begin{equation} \label{E:1:1}
\vec{\chi}(D) \geq \frac{n}{\vec{\alpha}(D)} = \Omega \left(\frac{n\ln q}{\ln(np)}\right).
\end{equation}

Now, we know that the underlying undirected graph of $D(n,p)$ behaves somehow similarly to the graph $G(n,2p)$. 
Recall furthermore that the list dichromatic number of a digraph is not larger than the list chromatic number
of its underlying graph.
Thus, by Theorem~\ref{T:1:3}, and because we have $p \leq \ln q$ for all $0\leq p\leq 1/4$, we \textit{a.a.s.} get:

\begin{equation} \label{E:1:2}
\vec{\chi_\ell}(D) = O\left(\frac{np}{\ln(np)}\right) = O\left(\frac{n\ln q}{\ln(np)}\right).
\end{equation} 

The claim then follows from Equations~(\ref{E:1:1}) and~(\ref{E:1:2}), and the trivial fact that $\vec{\chi}(D) \leq \vec{\chi_\ell}(D)$.
\end{proof}

\section{A general upper bound on $\vec{\chi_\ell}$} \label{section:upper}

One common direction of research related to the chromatic number of graphs is to study how the chromatic number
of a graph $G$ behaves provided $G$ fulfils some particular properties. For example, while the chromatic number of a general
graph $G$ can be as large as $\Delta(G)+1$ (according to Brook's Theorem), it is known that $\chi(G)$ drops to $O(\frac{\Delta(G)}{\log_2 \Delta(G)})$ 
as soon as $G$ is triangle-tree (as proved by Johansson~\cite{Joh96}).
It is hence legitimate to wonder whether such a phenomenon can also be observed concerning the dichromatic number
of digraphs having no digons. Note that a complete bidirected
clique on $\Delta + 1$ vertices is $\Delta$-regular with
dichromatic number equal to $\Delta + 1$. It is also easy to see that
$\Delta + 1$ is an upper bound for the dichromatic number of any
$\Delta$-regular digraph.

The following was notably conjectured by Erd\H{o}s~\cite{Erd79b}.

\begin{conjecture} [Erd\H{o}s] \label{conjecture:erdos}
For every digon-free digraph $D$ with maximum total degree $\Delta$, we have $\vec{\chi}(D) = O(\frac{\Delta}{\log_2 \Delta})$.
\end{conjecture}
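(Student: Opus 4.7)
The plan is to mimic Johansson's semi-random (R\"odl nibble) argument for triangle-free graphs, with digon-freeness playing the role of triangle-freeness. Set $k=C\Delta/\log_2\Delta$ for a sufficiently large constant $C$, and maintain for each still-uncoloured vertex $v$ a list of admissible colours $L(v)\subseteq\{1,\dots,k\}$, initialised as the full palette. I would build the colouring over $\Theta(\log_2\Delta)$ rounds of a \emph{wasteful} randomised procedure: at each round every uncoloured vertex independently picks a tentative colour uniformly from its current list, the tentative colour is kept as the final colour of $v$ provided it does not close a monochromatic directed cycle together with vertices coloured in earlier rounds, and colours that become untenable are then purged from the lists of uncoloured in- and out-neighbours. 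After all rounds are executed, a residual subdigraph of very small maximum total degree will remain, which can be finished greedily on a disjoint palette of $O(1)$ extra colours absorbed into the constant $C$.

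The analysis then tracks two per-vertex parameters for each still-uncoloured $v$: the list size $\ell(v)=|L(v)|$ and a congestion count $t(v)$ measuring, for each $c\in L(v)$, the number of still-uncoloured in- or out-neighbours $u$ of $v$ for which $c\in L(u)$. The heart of the argument is to show, via Talagrand's inequality applied to Lipschitz functions of the tentative-colour vector, that after one round $\ell(v)$ and $t(v)$ both contract by roughly the same multiplicative factor, with a small but uniform gain in the ratio $\ell(v)/t(v)$ coming from the digon-free hypothesis. Specifically, because no pair of vertices is joined by arcs in both directions, two neighbours $u_1,u_2$ of $v$ that happen to tentatively select the same colour $c\in L(v)$ can close a monochromatic cycle through $v$ only along a longer directed path through the graph, which has probability $O(1/k)$ rather than $\Omega(1)$. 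Iterating this invariant $\ell(v)/t(v)\geq 1+\epsilon$ for the $\Theta(\log_2\Delta)$ rounds brings $\ell(v)$ well above the effective degree of the residual conflict structure, and Haxell's list-colouring criterion (or a final application of the Lov\'asz Local Lemma) terminates the procedure.

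The main obstacle, and the reason the directed case is genuinely harder than the undirected one, is that during any round one must simultaneously guard against the creation of monochromatic directed cycles of \emph{every} length $\ell\geq 3$. Long cycles are benign: the probability of completing a given monochromatic directed $\ell$-cycle during a round is roughly $k^{1-\ell}$, and there are at most $\Delta^{\ell-1}$ directed $\ell$-cycles through any fixed vertex, so the total contribution is a convergent geometric series once $k\gg\Delta$. The delicate case is $\ell=3$: directed triangles through a vertex can number up to $\Delta^2$, too many to kill by a union bound, and here digon-freeness must be used quantitatively, exactly as triangle-freeness is used by Johansson, to prove a sharp concentration bound on the number of same-coloured directed triangles through $v$. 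Setting up the right martingale exposure or Talagrand certificate for these count variables, in a setting where the underlying bad events lack the symmetry enjoyed by undirected edges, is where I expect the bulk of the technical work to lie; once this estimate is obtained, the invariants of the nibble go through essentially as in the undirected template.
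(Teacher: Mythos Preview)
The statement you are attempting to prove is not proved in the paper: it is stated there as an open \emph{conjecture} of Erd\H{o}s, and the paper's contribution in that section is the much weaker Brooks-type bound $\vec{\chi_\ell}(D)\le (1-e^{-18})\tDelta$ of Theorem~\ref{theorem:upbound-list}. So there is no ``paper's own proof'' to compare against; the relevant question is whether your outline actually closes the conjecture.

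It does not. The gap is exactly at the step you flag yourself and then wave through: the assertion that ``digon-freeness must be used quantitatively, exactly as triangle-freeness is used by Johansson'' to control monochromatic directed triangles. In Johansson's argument the operative structural fact is that the neighbourhood of any vertex is an \emph{independent set}; this is what makes the per-colour congestion $t(v)$ decay in lockstep with $\ell(v)$ and yields the $1+\epsilon$ gain in the ratio. Digon-freeness gives nothing of the sort: the out-neighbourhood (and the in-neighbourhood) of a vertex in a digon-free digraph can be an arbitrary tournament, so two out-neighbours $u_1,u_2$ of $v$ that pick the same tentative colour $c$ may already sit on a directed edge $u_1\to u_2$, and together with $v\to u_1$ and $u_2\to v$ (if $u_2$ happens to be an in-neighbour as well as $u_1$ an out-neighbour) or with a single further arc they form a monochromatic directed triangle. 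Your probability estimate ``$O(1/k)$ rather than $\Omega(1)$'' for closing a cycle is therefore unjustified: the arc $u_1\to u_2$ is deterministic, not a further random event costing a factor $1/k$. Without a genuine substitute for the independence of neighbourhoods, the nibble invariants do not improve the ratio $\ell(v)/t(v)$, and the iteration stalls at a bound of order $\Delta$, not $\Delta/\log\Delta$. This is precisely why the conjecture remains open and why the paper settles for a $(1-\epsilon)\tDelta$ bound.
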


In this section, we study improvements of Brooks' Theorem for digon-free digraphs using the following substitute for the maximum degree. 
Given a digraph $D$, we denote 
\begin{equation*}
\tDelta = \tDelta(D) = \max\left\{\sqrt{d^+(v)d^-(v)}:v\in V(D)\right\}.
\end{equation*}

In~\cite{HM11}, Harutyunyan and Mohar provided the following upper bound on the dichromatic number.

\begin{theorem}[Harutyunyan and Mohar] \label{theorem:upbound-nonlist}
There is an absolute constant $\Delta_1$ such that every digon-free digraph $D$ with $\tDelta = \tDelta(D) \geq \Delta_1$ has $\vec{\chi}(D) \leq \left(1- e^{-13} \right) \tDelta$.
\end{theorem}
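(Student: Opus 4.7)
The plan is to proceed probabilistically, using a uniform random coloring together with the Lov\'asz Local Lemma, exploiting the digon-free hypothesis through one simple structural observation.

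Call a vertex $v$ \emph{trapped} under a coloring $c$ if $v$ has both an in-neighbor and an out-neighbor colored $c(v)$. If some color class contains a directed cycle $v_1 \to v_2 \to \cdots \to v_\ell \to v_1$, then every $v_j$ is trapped, since $v_{j-1}$ is a same-color in-neighbor and $v_{j+1}$ a same-color out-neighbor. The digon-free assumption rules out $2$-cycles, so it suffices to produce a coloring with $k = \lceil (1-e^{-13})\tDelta \rceil$ colors under which no vertex is trapped.

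I would then color each vertex independently and uniformly from $[k]$. Because $D$ is digon-free, the sets $N^-(v)$ and $N^+(v)$ are disjoint, so their random colors are mutually independent conditional on $c(v)$, and a direct computation gives
\[
\PP[v \text{ is trapped}] = \bigl(1 - (1-1/k)^{d^-(v)}\bigr)\bigl(1 - (1-1/k)^{d^+(v)}\bigr).
\]
The constraint $d^+(v)d^-(v) \leq \tDelta^2$ and log-concavity show that this quantity is maximized when $d^+(v) = d^-(v) = \tDelta$, giving an upper bound of the form $(1 - (1-1/k)^{\tDelta})^2 \sim (1 - e^{-\tDelta/k})^2$, which for our choice of $k$ is a fixed constant strictly less than $1$.

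The event $A_v = \{v \text{ is trapped}\}$ depends only on the colors of $\{v\} \cup N^-(v) \cup N^+(v)$, so two events $A_u, A_v$ are dependent only when these vertex sets intersect, giving a dependency degree polynomial in $\tDelta$. The main obstacle, and the likely source of the otherwise mysterious constant $e^{-13}$, is that a direct symmetric-LLL application would require $\PP[A_v]$ to be polynomially small in $\tDelta$, whereas the single-round bound is only a fixed constant. I expect the proof to bridge this gap by a two-stage ``partial coloring plus fixing'' strategy: reserve a small fraction of the palette (of size roughly $e^{-13}\tDelta$); perform the random coloring with the remaining colors; use concentration inequalities (Talagrand or Azuma) combined with LLL to show that the trapped set is locally sparse around every vertex with positive probability; and then recolor each trapped vertex from the reserved palette via a Hall-type matching or greedy extension. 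Optimizing the resulting trade-off between palette reservation and concentration exponents would yield the constant $1-e^{-13}$.
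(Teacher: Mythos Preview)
First, a framing point: the paper does not prove this theorem. Theorem~\ref{theorem:upbound-nonlist} is quoted from~\cite{HM11}, and what the paper actually proves is its list analog, Theorem~\ref{theorem:upbound-list}, by adapting the argument of~\cite{HM11}. So the relevant comparison is with that Harutyunyan--Mohar/Molloy--Reed style proof, whose structure is fully visible in the paper's proof of Theorem~\ref{theorem:upbound-list}.

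Your opening reduction via ``trapped'' vertices is correct and matches the uncoloring step of the actual argument (Step~2 there: uncolor every vertex on a monochromatic directed path of length~$\ge 2$). The gap is in the second stage. You propose to show that the trapped set $T$ is \emph{locally sparse} and then recolor $T$ from a reserved palette of size about $e^{-13}\tDelta$. But your own computation rules this out: with $k\approx(1-e^{-13})\tDelta$ colors and near-regular degrees $d^{\pm}(v)\approx\tDelta$, one gets $\PP[v\text{ trapped}]\approx(1-e^{-1})^2\approx 0.4$, so in expectation each vertex has about $0.4\,\tDelta$ trapped out-neighbors. Concentration only pins this near its mean; it does not make it small. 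Thus $D[T]$ has out-degrees of order $\tDelta$, far exceeding the $e^{-13}\tDelta$ reserved colors, and no Hall-type or greedy extension can finish from there.

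The actual proof controls a different quantity. After the random assignment and uncoloring, one does not bound $|T|$ locally at all; instead, for each vertex $v$ one counts the number of \emph{savings}: colors that are retained on at least two out-neighbors of $v$. If there are $r$ such colors, the colored portion of $N^+(v)$ blocks at most $|N^+(v)_{\mathrm{col}}|-r$ colors, and a one-line count (Lemma~\ref{L:1:5}) shows the partial coloring extends greedily once $k\ge\Delta_o+1-r$. One proves $\EE[\text{savings}]\ge e^{-c}\tDelta$, obtains concentration by writing savings as a difference $AX_v-DX_v$ and applying the Simple Concentration Bound and Talagrand's Inequality, and then invokes the Local Lemma across vertices. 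The constant $e^{-13}$ comes from this expected-savings lower bound, not from any sparsity of the uncolored set.
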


As the main result of this section, we extend Theorem~\ref{theorem:upbound-nonlist} to the list dichromatic number.
Namely, our result reads as follows.

\begin{theorem} \label{theorem:upbound-list}
There is an absolute constant $\Delta_1$ such that every digon-free digraph $D$ with $\tDelta = \tDelta(D) \geq \Delta_1$ has $\vec{\chi_\ell}(D) \leq \left(1- e^{-18} \right) \tDelta$.
\end{theorem}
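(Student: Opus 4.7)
The plan is to adapt the naive random coloring argument from the proof of Theorem~\ref{theorem:upbound-nonlist} in~\cite{HM11} to the list coloring setting, paying for the loss of a global color palette with a slightly worse constant (hence $e^{-18}$ rather than $e^{-13}$). Fix a $k$-list-assignment $L$ with $k = \lceil (1-e^{-18})\tDelta\rceil$. The goal is to produce, with positive probability, an $L$-coloring in which no color class contains a directed cycle.

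The first step is a single round of naive random coloring: each vertex $v$ independently and uniformly picks a tentative color $\phi(v) \in L(v)$. Because $D$ is digon-free, the shortest monochromatic directed cycle has length at least $3$, so the elementary local obstacle to a color class being acyclic is a monochromatic \emph{2-path} of the form $u \to v \to w$ with $\phi(u) = \phi(v) = \phi(w)$. I would call $v$ \emph{compromised} if $\phi(v)$ coincides with that of both some in-neighbor and some out-neighbor, and then \emph{uncolor} every compromised vertex. The probability that a fixed pair $(u,w) \in N^-(v) \times N^+(v)$ both receive the color $\phi(v)$ is at most $1/k^2$, so in expectation each vertex contributes to at most $d^-(v)d^+(v)/k \leq \tDelta^2/k = O(\tDelta)$ compromised configurations, and Talagrand's inequality gives the required concentration. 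After truncation, each kept color class contains no directed 2-path through a kept vertex, and is in particular acyclic.

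It remains to extend the coloring to the set $U$ of uncolored vertices. For each $v \in U$ define a residual list $L'(v) \subseteq L(v)$ consisting of the colors $c \in L(v)$ that would not create a monochromatic directed cycle together with the colors already fixed. The preceding calculation, sharpened with a Talagrand-type bound, shows that $|L'(v)|$ is at least a constant fraction of $k$ while the effective degree of $v$ in the residual conflict digraph is strictly smaller. An asymmetric Lovász Local Lemma, with bad events indexed by vertices whose residual list becomes too small or whose effective conflict degree becomes too large, then certifies that with positive probability such an extension exists, and a greedy completion finishes the coloring.

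The hard part, and the reason for the deteriorating absolute constant, is precisely that in the list regime one loses the global palette symmetry exploited in~\cite{HM11}: two vertices may have disjoint lists, so the expected number of conflicts at $v$ must be bounded per list rather than per color, and the LLL dependency digraph must be enlarged to account for bad events that share colors but not neighbors. Absorbing these two sources of slack is what forces the constant to drop from $e^{-13}$ to $e^{-18}$; once the LLL inequalities $4\Pr[B]\,d \leq 1$ are verified for each bad event $B$ with dependency degree $d$, the rest of the computation is routine and follows the template of~\cite{HM11}.
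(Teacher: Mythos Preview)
Your sketch misses the one genuinely new idea the list setting requires. In the paper, completion is governed by Lemma~\ref{L:1:5}: a partial $L$-coloring extends greedily provided that, for every vertex $v$, either (i) at least $r$ colors of $L(v)$ appear at least twice in $N^{+}(v)$, or (ii) at least $r$ colors \emph{not in $L(v)$} appear in $N^{+}(v)$. Case~(ii) is exactly what handles the situation you flag, where the lists of $v$ and its out-neighbours are largely disjoint: those out-neighbours then receive colors outside $L(v)$, which block nothing in $L(v)$, and that is itself the saving. Accordingly the paper splits vertices into Type~1 (many out-neighbours whose lists are far from $L(v)$) and Type~2 (many whose lists overlap $L(v)$ heavily), defines two different random variables $Y_v$ and $X_v$ measuring the two kinds of savings, and proves concentration for each separately. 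An additional uncoloring round (Step~3: keep each surviving color only with probability $\tfrac12$) is inserted so that the ``deleted'' variables $DY_v$, $DX_v$ have expectation $\Omega(\tDelta)$, which Talagrand's inequality needs.

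Your outline lacks this dichotomy and, as written, does not close. You assert that after uncoloring compromised vertices ``$|L'(v)|$ is at least a constant fraction of $k$'', but the calculation you refer to only bounds the expected number of monochromatic $2$-paths through $v$; it says nothing about how many \emph{distinct} colors of $L(v)$ are consumed in $N^{+}(v)$. If the out-neighbours of $v$ all retain pairwise distinct colors from $L(v)$, then $|L'(v)|$ can drop by as much as $d^{+}(v)>k$, and your greedy step fails. The remedy is not to enlarge the LLL dependency graph to include far-away vertices sharing colors --- in the paper the dependency stays purely local, governed by directed paths of length at most~$3$ --- but to track the second source of savings, namely colors outside $L(v)$ landing in $N^{+}(v)$. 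Without a variable playing the role of $Y_v$, the argument has no mechanism to succeed in the disjoint-list regime.
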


\noindent The rest of this section is dedicated to the proof of Theorem \ref{theorem:upbound-list}. The proof is a modification of the argument in \cite{HM11}, which is itself based on an argument due to Molloy and Reed in~\cite{MR02} concerning the chromatic number of graphs. We first prove the following simple lemma.

\begin{lemma} \label{L:1:5}
Let $D$ be a digraph with maximum out-degree $\Delta_o$, and $L$ be a $(\Delta_o + 1 - r)$-list-assignment to $D$. Suppose we have a partial $L$-coloring of $D$ such that, for every vertex $v$, at least one of the followings holds:
\begin{enumerate}
	\item There are at least $r$ colors not in $L(v)$ that appear in $N^+(v)$.
	\item There are at least $r$ colors in $L(v)$ that appear at least twice in $N^+(v)$.
\end{enumerate}
Then $D$ is $L$-colorable.
\end{lemma}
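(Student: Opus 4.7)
The natural strategy is to reduce the task to extending the partial coloring over the subdigraph $D[U]$ induced by the set $U$ of still-uncolored vertices. For each $v \in U$, let $k(v)$ denote the number of already-colored out-neighbors of $v$, let $S(v)$ be the set of colors that they carry, and set the residual list $L'(v) := L(v) \setminus S(v)$. Since $d^+_{D[U]}(v) = d^+(v) - k(v) \leq \Delta_o - k(v)$, it suffices to establish the central estimate
\begin{equation*}
|L'(v)| \;\geq\; \Delta_o + 1 - k(v),
\end{equation*}
which immediately yields $|L'(v)| \geq d^+_{D[U]}(v) + 1$ for every $v \in U$.

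The two alternatives of the hypothesis supply exactly this bound after a short case analysis. In case~(1), the $r$ colors in $S(v) \setminus L(v)$ account for $r$ elements of $S(v)$ that cannot lie in $L(v) \cap S(v)$, so $|L(v) \cap S(v)| \leq |S(v)| - r \leq k(v) - r$. In case~(2), the $r$ in-list colors that each appear at least twice among the colored out-neighbors occupy at least $2r$ of the $k(v)$ corresponding slots, forcing $|S(v)| \leq r + (k(v) - 2r) = k(v) - r$. Since $|L(v)| = \Delta_o + 1 - r$, both cases deliver $|L'(v)| \geq \Delta_o + 1 - k(v)$.

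With this in hand, I would invoke the standard directed greedy argument on $D[U]$ with the list assignment $L'$: by induction on $|U|$, remove any vertex $v \in U$, dichromatically $L'$-color $D[U] - v$ by the inductive hypothesis, and then assign $v$ any color of $L'(v)$ avoiding the at most $d^+_{D[U]}(v)$ colors used by the out-neighbors of $v$ in $U$ -- such a color exists since $|L'(v)| \geq d^+_{D[U]}(v) + 1$. Any new monochromatic cycle through $v$ would necessarily use a monochromatic out-arc from $v$, which is excluded by construction. This yields a dichromatic $L'$-coloring $\phi'$ of $D[U]$.

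Combining $\phi'$ with the original partial coloring $\phi$ produces a candidate $L$-coloring of $D$; it remains to verify validity. A monochromatic directed cycle $C$ disjoint from $U$ would already be monochromatic under $\phi$, contradicting that $\phi$ is a partial $L$-coloring. Otherwise, pick any $v \in V(C) \cap U$ with successor $w$ on $C$; the condition $\phi'(v) \in L'(v) = L(v) \setminus S(v)$ rules out $w$ being a pre-colored out-neighbor of $v$ carrying the same color, so $w \in U$. Iterating along $C$ forces $V(C) \subseteq U$, contradicting the fact that $\phi'$ is dichromatic on $D[U]$. The delicate step is the case analysis in the second paragraph, which upgrades the trivial estimate $|L'(v)| \geq 1$ into the tight bound $|L'(v)| \geq d^+_{D[U]}(v) + 1$ needed for the greedy completion to succeed; everything else is routine bookkeeping.
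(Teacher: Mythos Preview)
Your proof is correct and follows the same approach as the paper: both arguments establish that for every uncolored vertex $v$ the residual list $L'(v)=L(v)\setminus S(v)$ has size exceeding the number of uncolored out-neighbours of $v$, after which a greedy extension completes the coloring. The paper compresses the case analysis and the verification that the extension yields a valid dichromatic coloring into a single sentence, whereas you (correctly) spell both out in detail; the only cosmetic slip is writing $|L(v)|=\Delta_o+1-r$ instead of $|L(v)|\geq\Delta_o+1-r$, which of course only strengthens your inequality.
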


\begin{proof}
In both cases, the number of non-used colors (\textit{i.e.} not used on $N^+(v)$) that remain in $L(v)$ is greater than the number of uncolored out-neighbors of $v$.
The partial $L$-coloring can thus be extended greedily.
\end{proof}

\begin{proof}[Proof of Theorem \ref{theorem:upbound-list}]
We may assume that $c_1 \tDelta < d^{+}(v) < c_2\tDelta$ and $c_1
\tDelta < d^{-}(v) < c_2\tDelta$ for every $v \in V(D)$, where $c_1
= 1 - \frac{1}{3}e^{-16}$ and $c_2 = 1 + \frac{1}{3}e^{-16}$. If
not, we remove all the vertices $v$ not satisfying the above
inequality and obtain a coloring for the remaining graph with
$\left (1-e^{-18} \right) \tDelta$ colors. Now, if a vertex does
not satisfy the above condition either one of $d^{+}(v)$ or
$d^{-}(v)$ is at most $c_1 \tDelta$ or one of $d^{+}(v)$ or
$d^{-}(v)$ is at most $\frac{1}{c_2} \tDelta$. Note that $1 -
e^{-18}
> \max \{c_1, 1/c_2\}$. This ensures that there is a color in $L(v)$
that either does not appear in the in-neighborhood or does not
appear in the out-neighborhood of $v$, allowing us to complete the
coloring.

The core of the proof is probabilistic. We first consider that every vertex is given a list of size $L=\lfloor \tDelta/2 \rfloor$ colors. Our random process is as follows:

\begin{enumerate}
	\item Assign to each vertex a color from its list uniformly at random.
	\item Uncolor a vertex if it is on a monochromatic directed path of length at least $2$ (containing at least $3$ vertices).
	\item Uncolor every vertex that retains its color with probability $\frac{1}{2}$.
\end{enumerate}

Clearly, this results in a proper partial $L$-coloring of $D$ since we do not keep any monochromatic directed cycle in our digraph.
In our upcoming arguments, we classify the vertices into two types depending on the color list of their out-neighbors:

\begin{enumerate}
	\item Vertex $v$ is of type $1$ if $v$ has at least $\lfloor \frac{c_1\tDelta}{2} \rfloor$ out-neighbors which contain at least $\lfloor \frac{L}{2} \rfloor$ colors in their list that are \textit{not} in $L(v)$. 
	\item Vertex $v$ is of type $2$ if $v$ has at least $\lfloor \frac{c_1\tDelta}{2} \rfloor$ out-neighbors which contain at least $\lfloor \frac{L}{2} \rfloor$ colors in their list that are in $L(v)$.
\end{enumerate}

It is clear that every vertex in $D$ must belong to at least one of these two types (note that it can be of both type). In fact, this classification of vertices corresponds to the two cases listed in Lemma \ref{L:1:5}. Intuitively, if $v$ is of type $1$, meaning that $v$ has many neighbors whose lists are very different from $L(v)$, then it is likely that some colors not in $L(v)$ will eventually appear in $N^+(v)$. In this case, we are interested in the random variable $Y_v$ defined as the number of colors \textit{not} in $L(v)$ which are assigned to at least one out-neighbor of $v$ and are retained by all of these vertices. On the other hand, if $v$ is of type $2$, we expect some colors of $L(v)$ to be repeated in $N^+(v)$ since the lists of the out-neighbors of $v$ are very similar to $L(v)$. 
Let $X_v$ be the number of colors in $L(v)$ which are assigned to at least two out-neighbors of $v$ and are retained by all of these vertices. 
Let $A_v$ be the event that both $Y_v$ and $X_v$ are less than $\frac{1}{2}e^{-16}\tDelta + 1$.
Having an event $A_v$ occurring is "bad" in our context; but we will prove that there is a chance that none
of the $A_v$'s occurs using the following version of the Lov\'{a}sz Local Lemma.

\begin{theorem}[Lov\'{a}sz Local Lemma -- Symmetric version] \label{prop: Lovasz}
Let $A_1,..., A_n$ be a finite set of events in some probability space $\Omega$ such that each $A_i$ occurs with probability at most $p$, and each $A_i$ is mutually independent of all but at most $d$ other events $A_j$. If $4pd \leq 1$, then $\PP \left[\cap_{i=1}^{n} \overline{A_i} \right] > 0$.
\end{theorem}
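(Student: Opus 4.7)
The plan is to establish, by induction on $|S|$, the stronger conditional bound
\begin{equation*}
\PP[A_i \mid \cap_{j \in S} \overline{A_j}] \leq 2p
\end{equation*}
for every index $i$ and every subset $S \subseteq \{1,\ldots,n\} \setminus \{i\}$. Once this is in hand, the theorem follows by the chain rule
\begin{equation*}
\PP\!\left[\cap_{i=1}^{n} \overline{A_i}\right] = \prod_{i=1}^{n} \PP[\overline{A_i} \mid \cap_{j<i} \overline{A_j}] \geq (1-2p)^{n},
\end{equation*}
which is strictly positive because the hypothesis $4pd \leq 1$ forces $2p \leq 1/(2d) < 1$.

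For the induction, the base case $S = \emptyset$ is the trivial bound $\PP[A_i] \leq p$. For the inductive step, I would split $S$ into $S_1 = \{j \in S : A_j \text{ is not independent of } A_i\}$ and $S_2 = S \setminus S_1$, observing that $|S_1| \leq d$ by hypothesis. Then I would rewrite the target conditional probability as the ratio
\begin{equation*}
\PP[A_i \mid \cap_{j \in S} \overline{A_j}] = \frac{\PP[A_i \cap (\cap_{j \in S_1} \overline{A_j}) \mid \cap_{j \in S_2} \overline{A_j}]}{\PP[\cap_{j \in S_1} \overline{A_j} \mid \cap_{j \in S_2} \overline{A_j}]}.
\end{equation*}

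The numerator is at most $\PP[A_i \mid \cap_{j \in S_2} \overline{A_j}]$, and because $A_i$ is mutually independent of the family $\{A_j : j \in S_2\}$, this collapses to $\PP[A_i] \leq p$. The denominator is bounded below via a union bound: it is at least $1 - \sum_{j \in S_1} \PP[A_j \mid \cap_{k \in S_2} \overline{A_k}]$, and each term in the sum is at most $2p$ by the inductive hypothesis, since the conditioning set $S_2$ has strictly smaller cardinality than $S$. Using $|S_1| \leq d$ and $4pd \leq 1$, the denominator is at least $1 - 2pd \geq 1/2$, so the ratio is at most $2p$, closing the induction.

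The main obstacle is the bookkeeping around well-definedness of the conditional probabilities: one must check that $\PP[\cap_{j \in T} \overline{A_j}] > 0$ for every intermediate set $T$, which I would fold into a joint induction using the same bound (each factor $1 - \PP[A_i \mid \ldots] \geq 1-2p > 0$). The degenerate cases $S_1 = \emptyset$ (where the ratio identity degenerates and one appeals to independence directly) and $S_2 = \emptyset$ (where the outer conditioning is vacuous) should be verified, but both reduce cleanly to the base case or to a direct application of mutual independence.
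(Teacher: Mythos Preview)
The paper does not prove this statement: it is quoted as a standard tool (with an implicit reference to~\cite{MR02}) inside the proof of Theorem~\ref{theorem:upbound-list}, so there is no ``paper's own proof'' to compare against.

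Your argument is the classical inductive proof of the symmetric Local Lemma and is correct. A couple of minor remarks: in the denominator you could equally well expand $\PP[\cap_{j\in S_1}\overline{A_j}\mid \cap_{j\in S_2}\overline{A_j}]$ as a product via the chain rule rather than a union bound, which is how the proof is often written and which makes the well-definedness of the intermediate conditionals automatic; and the inequality $2p<1$ that you extract from $4pd\le 1$ tacitly assumes $d\ge 1$, so the degenerate case $d=0$ (full mutual independence) should be handled separately, which is immediate.
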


\noindent More precisely, we will show, using the Lov\'{a}sz Local Lemma, that with positive probability none of the events $A_v$ occurs, meaning that for every vertex $v$, either $Y_v$ or $X_v$ has to be bigger than $\frac{1}{2}e^{-16}\tDelta + 1$. Then Lemma \ref{L:1:5} will imply that $$\vec{\chi_\ell}(D) \leq (c_2-\frac{1}{2}e^{-16}) \tDelta \leq  (1 - e^{-18})\tDelta,$$ finishing the proof. 

Note that the color initially assigned to a vertex $u$ can affect $Y_v$ (and $X_v$) only if $u$ and $v$ are joined by a path of length at most 3. Thus, $A_v$ is mutually independent of all except at most $$(2c_2\tDelta) + (2c_2\tDelta)^{2} + (2c_2\tDelta)^3 + (2c_2\tDelta)^4 + (2c_2\tDelta)^5 + (2c_2\tDelta)^6 \leq 150 \tDelta^{6}$$ other
events $A_w$. Therefore, by the symmetric version of the Local Lemma, it suffices to show that for each event $A_v$, we have $$4 \cdot 150 \tDelta^6 \PP[A_v] < 1.$$ We will show that $\PP[A_v] <\tDelta^{-7}$ by considering each type of vertices separately.

We first prove the following two lemmas concerning type-1 vertices.

\begin{lemma}\label{L:1:5:2}
For every type-1 vertex $v$, we have $\EE[Y_v] \geq e^{-16}\tDelta$.
\end{lemma}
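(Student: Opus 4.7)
The plan is to lower bound $\EE[Y_v]$ by counting pairs $(u,c)$ whose ``unique monochromatic contribution'' has good probability. Since $v$ is of type~1, fix a set $U\subseteq N^+(v)$ of at least $\lfloor c_1\tDelta/2\rfloor$ out-neighbors each having at least $\lfloor L/2\rfloor$ list colors outside $L(v)$. For every pair $(u,c)$ with $u\in U$ and $c\in L(u)\setminus L(v)$, let $W_{u,c}$ be the indicator of the event that (i)~$u$ is assigned $c$ in step~1, (ii)~no other out-neighbor of $v$ is assigned $c$ in step~1, and (iii)~$u$ is not uncolored in step~2 or step~3. Since $W_{u,c}=1$ forces $c$ to contribute to $Y_v$ and at most one $W_{u,\cdot}$ can fire per color, $Y_v\geq\sum_{u,c}W_{u,c}$, so it suffices to lower bound $\PP[W_{u,c}=1]$ and multiply by the number of pairs.

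To bound $\PP[W_{u,c}=1]$ I would exploit the independence of the step-1 color choices across vertices and of the step-3 coin flips. Item~(i) has probability $1/L$. Since at most $c_2\tDelta$ out-neighbors of $v$ contain $c$ in their list, the conditional probability of~(ii) given~(i) is at least $(1-1/L)^{c_2\tDelta}=e^{-2c_2}(1+o(1))$. Step~3 uncolors $u$ independently with probability $1/2$, contributing a factor~$1/2$. The remaining ingredient is the probability that $u$ is not uncolored in step~2, i.e., that no directed 2-path through $u$ becomes monochromatic in color $c$.

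For this step-2 survival estimate, I would observe that at most $3(c_2\tDelta)^2$ ordered pairs $(w_1,w_2)$ of other vertices can form a 2-path with $u$ (the three types being in-in, through, and out-out), and for each such pair the event ``both $w_1$ and $w_2$ receive $c$'' has probability at most $1/L^2$. These ``bad'' events, together with the ``some other out-neighbor of $v$ receives $c$'' event, are all increasing in the independent indicators for ``$w$ receives $c$''. By the FKG inequality their complements are positively correlated, which lets me handle the ``no mono 2-path'' and ``no other out-neighbor of $v$ receives $c$'' factors independently: avoiding all bad paths has probability at least $(1-1/L^2)^{3(c_2\tDelta)^2}\geq e^{-12c_2^2}(1+o(1))$. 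Combining everything yields $\PP[W_{u,c}=1]\geq\frac{1}{2L}e^{-2c_2-12c_2^2}(1+o(1))$.

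Summing over the at least $\lfloor c_1\tDelta/2\rfloor\lfloor L/2\rfloor$ pairs then gives $\EE[Y_v]\geq\frac{c_1\tDelta}{8}e^{-2c_2-12c_2^2}(1+o(1))$, which yields the desired $\EE[Y_v]\geq e^{-16}\tDelta$ once $\tDelta$ is large and $c_1,c_2$ are close enough to $1$. The main obstacle I anticipate is the step-2 survival bound: the expected number of monochromatic 2-paths through $u$ is a constant larger than $1$, so Markov-style union bounds give nothing, and invoking the FKG inequality to convert this moderate expected count into an exponential lower bound on the ``all avoided'' probability is the critical maneuver in the proof.
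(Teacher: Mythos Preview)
Your overall strategy matches the paper's: lower bound $Y_v$ by the number of colors $c\notin L(v)$ that are assigned to exactly one out-neighbor of $v$ and retained there, then bound the probability pair by pair and sum. The divergence is entirely in how you handle step-2 survival. The paper does not try to control monochromatic 2-paths at all; it simply asks for the stronger event that \emph{no vertex in $S=N(u)\cup N^+(v)$} receives colour $c$. This single event simultaneously forces your condition~(ii) and guarantees that $u$ lies on no monochromatic path (any such path must use a neighbour of $u$), and since colour choices are independent across vertices one gets
\[
\PP[\text{no vertex in }S\text{ receives }c]\;\geq\;(1-1/L)^{|S|}\;\geq\;(1-1/L)^{3c_2\tDelta}\;\approx\;e^{-6c_2},
\]
with no correlation inequality needed. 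Multiplying by the $\tfrac12$ from step~3 and summing over $\sum_{c\notin L(v)}N_c\geq c_1\tDelta L/4$ gives roughly $\tfrac{c_1}{8}e^{-6c_2}\tDelta$, comfortably above $e^{-16}\tDelta$.

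Your FKG route is valid but strictly lossier. Since ``no neighbour of $u$ receives $c$'' already implies ``no monochromatic 2-path through $u$'', the true step-2 survival probability is at least about $e^{-4c_2}$; your product bound $(1-1/L^2)^{3(c_2\tDelta)^2}\approx e^{-12c_2^2}$ discards most of this. The upshot is that your final constant, $\tfrac{c_1}{8}e^{-2c_2-12c_2^2}\approx e^{-16.08}$, narrowly fails to reach the stated $e^{-16}$ (recall $c_1,c_2$ are fixed, not adjustable). So the argument is conceptually correct and would prove the lemma with $e^{-17}$ in place of $e^{-16}$, but as written it does not establish the exact claim. Replacing your 2-path analysis by the paper's ``forbid all of $N(u)\cup N^+(v)$'' condition both eliminates the need for FKG and repairs the constant with room to spare.
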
  

\begin{proof}
Let $Y'_v$ be the random variable denoting the number of colors not in $L(v)$ that are assigned to exactly one out-neighbor of $v$ and are retained by this vertex. As we clearly have $Y_v \geq Y'_v$, it suffices to consider $\EE[Y'_v]$. 

Note that any color $c\notin L(v)$ will be counted by $Y'_v$ if a vertex $u \in N^{+}(v)$ is colored $c$ and no other vertex in $S = N(u)\cup N^{+}(v)$ is assigned color $c$. This will give us a lower bound on $\EE[Y'_v]$. For $c\notin L(v)$, let $N_c$ be the number of out-neighbors of $v$ containing color $c$ in their list. Since $v$ is of type~$1$, we have $\sum_{c\notin L(v)}N_c \geq \frac{c_1\tDelta L}{4}$. Clearly, there are $N_c$ choices for the vertex $u$. The probability that no vertex in $S$ different from $u$ gets colored $c$ is at least $(1- \frac{1}{C})^{|S|} \geq  (1- \frac{1}{C})^{3c_2\tDelta}$. Note that $u$ also needs to avoid being uncolored during Step~$3$ of our random process so that its color is preserved. Therefore
\begin{eqnarray*}
\PP[\textrm{$c$ is counted in $Y'_v$}] &\geq& N_c \cdot \frac{1}{L} \cdot 
\left(1- \frac{1}{L}\right)^{3c_2 \tDelta} \left(\frac{1}{2}\right).\\
\end{eqnarray*}
Hence, by linearity of expectation,
\begin{eqnarray*}
\EE[Y'_v] &\geq& \left(\sum_{c\notin L(v)}N_c\right) \frac{1}{L} \cdot 
\left(1- \frac{1}{L}\right)^{3c_2 \tDelta} \left(\frac{1}{2}\right)\\
&\geq& \frac{c_1\tDelta L}{4} \frac{1}{L} e^{-7} \frac{1}{2} \\
&\geq& e^{-16}\tDelta
\end{eqnarray*}
for $\tDelta$ sufficiently large.
\end{proof}

\begin{lemma}\label{L:1:5:3}
For every type-1 vertex $v$, we have $$\PP \left[ | Y_v - \EE[Y_v] | > \log_2 \tDelta \sqrt{\EE[Y_v]} \,
\right] < \tDelta^{-7}.$$
\end{lemma}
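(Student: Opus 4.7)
The approach is to apply Talagrand's Inequality, in the same vein as the concentration arguments used by Molloy and Reed in~\cite{MR02} for the analogous step of Johansson's theorem (and as carried out in~\cite{HM11} for the non-list analogue). The random variable $Y_v$ is a function of two families of independent trials: the color initially assigned to each vertex in Step~1, and the Bernoulli outcome of the retention coin flip in Step~3. Because an out-neighbor of $v$ being retained with some color $c$ depends only on colors of vertices within distance~$2$ of $N^{+}(v)$, the variable $Y_v$ is genuinely a function of only $O(\tDelta^{3})$ of these trials.

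The first task is to verify the two hypotheses needed by Talagrand's Inequality. For the Lipschitz condition, observe that changing the color of a single vertex $u$ can alter $Y_v$ by at most a constant: only the indicators corresponding to the old and new colors of $u$ may change, so $Y_v$ shifts by at most $2$. Likewise, flipping the Step~$3$ outcome at $u$ alters the indicator for at most one color, shifting $Y_v$ by at most $1$. For the certifiability condition, if $Y_v \geq s$ then one can point to $s$ distinct colors $c_1,\ldots,c_s \notin L(v)$, each witnessed by a single out-neighbor $u_i$ coloured $c_i$ that was retained. Retention of $u_i$ is certified by (a)~the assigned color of $u_i$, (b)~the Step~$3$ outcome at $u_i$, and (c)~enough color assignments in the neighborhood of $u_i$ to rule out a monochromatic directed path through $u_i$. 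A constant number of witnessing trials per color suffices, so $rs$ trials in total certify $Y_v \geq s$ for some absolute constant $r$.

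Once both hypotheses are in place, Talagrand's Inequality yields a bound of the form
\[
\PP\bigl[|Y_v - \EE[Y_v]| > t\bigr] \leq 4\exp\!\left(-\frac{t^2}{8 c^2 r\,\EE[Y_v]}\right)
\]
(up to an additive adjustment between median and mean that can be absorbed in the constants). Substituting $t = \log_2 \tDelta \sqrt{\EE[Y_v]}$ makes the exponent equal to $-\Omega\bigl((\log_2 \tDelta)^2\bigr)$, so the right-hand side drops faster than any fixed polynomial in $\tDelta$ and is in particular smaller than $\tDelta^{-7}$ once $\tDelta$ is large enough.

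The main obstacle I anticipate is the certifiability condition, since survival of Step~$2$ is a non-local event: a vertex is retained only if no monochromatic directed path passes through it, which a priori involves many trials. The standard workaround, which I would implement here, is to restrict attention to a truncated variant of $Y_v$, e.g.\ the variable $Y'_v$ from the proof of Lemma~\ref{L:1:5:2} that counts only colors appearing on exactly one out-neighbor, for which a single witness and its immediate neighborhood fully certify each contribution. One proves concentration of $Y'_v$ by Talagrand's Inequality as described, and then relates $Y_v$ to $Y'_v$ by showing that, with probability $1 - \tDelta^{-\omega(1)}$, no color not in $L(v)$ is assigned to many out-neighbors simultaneously, so the two quantities differ by $o(\log_2 \tDelta \sqrt{\EE[Y_v]})$ with the required probability.
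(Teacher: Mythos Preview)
Your proposal correctly identifies Talagrand's Inequality as the relevant tool and correctly flags certifiability of retention as the key obstacle, but the workaround you propose does not actually resolve it. To certify that a witness $u_i$ survived Step~2 you must certify that \emph{no} monochromatic directed path of length~2 passes through $u_i$; this is a universal, not existential, condition, and requires exposing the colors of \emph{all} $\Theta(\tDelta)$ neighbors of $u_i$, not a constant number. Hence the certifiability parameter is $r = \Theta(\tDelta)$ rather than $r = O(1)$, and with $t = \log_2\tDelta\sqrt{\EE[Y_v]}$ the Talagrand exponent becomes $\Theta((\log_2\tDelta)^2/\tDelta) \to 0$, which is useless. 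Passing to $Y'_v$ does not help, since the same certifiability problem arises for each single retained witness; and the suggested coupling $Y_v \approx Y'_v$ is also not tight enough, since colors landing on two or more out-neighbors contribute $\Theta(\tDelta)$ in expectation, far larger than $o(\log_2\tDelta\sqrt{\tDelta})$.

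The paper circumvents exactly this difficulty by writing $Y_v = AY_v - DY_v$, where $AY_v$ counts colors outside $L(v)$ assigned to at least one out-neighbor and $DY_v$ counts such colors that are \emph{removed} from at least one out-neighbor. The point is that $AY_v$ depends only on the Step~1 assignments in $N^{+}(v)$, so the Simple Concentration Bound (Azuma) applies directly; while $DY_v$ \emph{is} certifiable with $r = O(1)$, because removal is an existential event---one exposes the witness, and then either two further color assignments exhibiting the monochromatic path, or the single Step~3 coin flip. This decomposition is the missing idea in your proposal; without it the concentration argument does not close.
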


\begin{proof}
Let $AY_v$ be the random variable counting the number of colors not in $L(v)$
assigned to at least one out-neighbor of $v$, and $DY_v$ be the
random variable that counts the number of colors not in $L(v)$ assigned to at
least one out-neighbor of $v$ but removed from at least one of
them. Clearly, $Y_v = AY_v - DY_v$ and therefore it suffices to
show that both $AY_v$ and $DY_v$ are sufficiently concentrated
around their mean. We will show that for $t = \frac{1}{2} (\log_2
\tDelta) \sqrt{\EE[Y_v]}$, the following estimates hold:

\medskip

\noindent\textbf{Claim 1.1.} $\PP \left[|AY_v - \EE[AY_v]| > t \right] < 2 e^{-t^2/(8
\tDelta)}$.

\medskip

\noindent\textbf{Claim 1.2.} $\PP \left[|DY_v - \EE[DY_v]| > t \right]
  < 4 e^{-t^2/(200 \tDelta)}$.

\medskip
\noindent The two inequalities in Claims~1.1 and~1.2 yield that, for $\tDelta$
sufficiently large,
\begin{eqnarray*}
\PP[ | Y_v - \EE[Y_v] | > \log_2 \tDelta \sqrt{\EE[Y_v]}] &\leq& 2
e^{-\frac{t^2}{8 \tDelta}} + 4 e^{-\frac{t^2}{200 \tDelta}}\\
&\leq& \tDelta^{- \log_2 \tDelta}\\
&<& \tDelta^{-7},
\end{eqnarray*}
as we require. So it remains to establish both claims.

To prove Claim 1.1, we use a version of Azuma's Inequality from
\cite{MR02} called the Simple Concentration Bound. 

\begin{theorem}[Simple Concentration Bound] \label{th:1.5}
Let $X$ be a random variable determined by $n$ independent trials
$T_1,..., T_n$, and satisfying the property that changing the
outcome of any trial $T_i$ can affect $X$ by at most $c$. Then
$$\PP \left[|X-\EE[X]| > t \right] \leq 2e^{-\frac{t^2}{2c^2n}}. $$
\end{theorem}

Note that $AY_v$ depends only on the colors assigned to the
out-neighbors of $v$. Furthermore, each random choice can affect
$AY_v$ by at most 1. Therefore, we can take $c=1$ in the Simple
Concentration Bound for $X=AY_v$. Since the random assignments
of colors are made independently over the vertices, and we have
$d^{+}(v) \leq c_2 \tDelta$, Claim~1.1. is immediately deduced.

For Claim 1.2, we use the following variant of Talagrand's
Inequality. 

\begin{theorem}[Talagrand's Inequality] \label{th:2}
Let $X$ be a nonnegative random variable, not equal to 0, which is
determined by $n$ independent trials $T_1,\dots,T_n$, and
satisfies the following conditions for some $c,r > 0$:
\begin{enumerate}
\item Changing the outcome of any trial $T_i$ can affect $X$ by
at most $c$. 
\item If $X \geq s$ for any $s$, then there are at most
$rs$ trials whose exposure certifies that $X \geq s$.
\end{enumerate}
Then, for any $0 \leq \lambda \leq \EE[X]$, we have
$$ \PP \left[|X-\EE[X]| >  \lambda + 60c \sqrt{r\EE[X]} \, \right]
   \leq 4e^{-\frac{\lambda^2}{8c^2r\EE[X]}}.
$$
\end{theorem}

We apply Talagrand's Inequality to the random variable $DY_v$.
Note that we can take $c=1$ since any single random color
assignment can affect $DY_v$ by at most 1. Now suppose that
$DY_v \geq s$. One can certify that $DY_v \geq s$ by exposing,
for each of the $s$ colors $i$, one random color assignments in
$N^{+}(v)$ that certify that at least one vertex $u$ got color $i$. If $u$ lost its color during Step 2 of our random process, then we can expose two other color assignments to show that, 
or if $u$ lost its color during Step~3, we can expose that random choice to prove that $u$ is uncolored. Therefore, $DY_v
\geq s$ can be certified by exposing $4s$ random choices, and
hence we may take $r=4$ in Talagrand's Inequality. Note that $$t=
\frac{1}{2} \log_2 \tDelta \sqrt{\EE[Y_v]}
>\!\!> 60c \sqrt{r\EE[DY_v]} $$ since $\EE[Y_v] \geq \tDelta/e^{16}$ and
$\EE[DY_v] \leq c_2 \tDelta$. Now, taking $\lambda$ in
Talagrand's Inequality to be $\lambda = \frac{1}{2}t$, we obtain
that $$\PP[|DY_v -\EE[DY_v]| > t] \leq \PP[|DY_v-\EE[DY_v]| >
\lambda + 60c \sqrt{r\EE[DY_v]}].$$ Therefore, provided that $ \lambda
\leq \EE[DY_v]$, Claim~1.2 is confirmed.

It is sufficient to show that $\EE[DY_v] = \Omega (\tDelta)$
since $\lambda = O(\log_2 \tDelta \sqrt{\tDelta})$. It is evident that a color $c$ will be counted in $DY_v$ if it is assigned to \textit{exactly} one out-neighbor $u$ of $v$ and also removed from this vertex. Note that if the color list of $u$ is very different from the lists of its neighbors, it is possible that $u$ cannot get uncolored during Step~2 of our random process. However, vertex $u$ always has a positive probability to get uncolored because of Step~3. So we have
\begin{eqnarray*}
\PP[\textrm{$c$ is counted in $DY_v$}] &\geq& N_c \cdot \frac{1}{L} \cdot
\left(1- \frac{1}{L}\right)^{3c_2 \tDelta} \left(\frac{1}{2}\right).
\end{eqnarray*}
Hence, by linearity of expectation,
\begin{eqnarray*}
\EE[DY_v] &\geq& \left(\sum_{c\notin L(v)}N_c\right) \cdot \frac{1}{L} 
\left(1- \frac{1}{L}\right)^{3c_2 \tDelta} \left(\frac{1}{2}\right)\\
&\geq& \frac{c_1\tDelta L}{4} \frac{1}{L} e^{-7} \frac{1}{2} \\
&\geq& e^{-16}\tDelta
\end{eqnarray*}
for $\tDelta$ sufficiently large. Therefore, $\EE[DX_v] = \Omega (\tDelta)$ as required.
\end{proof}
\medskip

Since $\EE[Y_v] \leq c_2 \tDelta$, Lemmas \ref{L:1:5:2} and
\ref{L:1:5:3} imply $$\PP[A_v] < \PP[Y_v<\frac{1}{2}e^{-16}\tDelta+1]< \tDelta^{-7}$$ for every type-1 vertex $v$. 
We will now prove the similar claim for all type-2 vertices by proving the following two lemmas.

\begin{lemma} \label{L:1:5:4}
For every type-2 vertex $v$, we have $\EE[X_v] \geq e^{-16}\tDelta$.
\end{lemma}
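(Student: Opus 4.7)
The plan is to mirror the proof of Lemma \ref{L:1:5:2}, but pairing out-neighbours of $v$ rather than treating them one at a time, since $X_v$ demands a colour to appear on at least two of them. For each colour $c \in L(v)$, write $N_c$ for the number of out-neighbours of $v$ whose list contains $c$; the type-$2$ hypothesis then yields $\sum_{c \in L(v)} N_c \geq \lfloor c_1 \tDelta/2 \rfloor \cdot \lfloor L/2 \rfloor$, which for $\tDelta$ large is at least $c_1 \tDelta L / 5$. This linear lower bound on the total mass is the only structural input the proof needs.

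First I would introduce, for every unordered pair $\{u_1, u_2\}$ of out-neighbours of $v$ and every colour $c \in L(v) \cap L(u_1) \cap L(u_2)$, the indicator $Z_{\{u_1, u_2\}, c}$ of the event that $u_1$ and $u_2$ are both assigned $c$ in Step~$1$, no vertex of $S := (N(u_1) \cup N(u_2) \cup N^+(v)) \setminus \{u_1, u_2\}$ is assigned $c$, and both $u_1$ and $u_2$ survive Step~$3$. Whenever this event occurs, $c$ is counted by $X_v$: neither $u_1$ nor $u_2$ can lie on a monochromatic directed path of length at least $2$, since every vertex adjacent to either of them in the underlying graph lies in $S$ and therefore avoids $c$. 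Moreover, for each fixed $c$ at most one pair can satisfy $Z = 1$, because $S$ contains every out-neighbour of $v$ other than $u_1$ and $u_2$; so summing gives the valid bound $X_v \geq \sum_c \sum_{\{u_1, u_2\}} Z_{\{u_1, u_2\}, c}$.

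Then I would compute the expectation. Independence of the Step-$1$ assignments together with $|S| \leq 5 c_2 \tDelta$ gives $\PP[Z_{\{u_1, u_2\}, c} = 1] \geq \frac{1}{4 L^{2}} (1 - 1/L)^{5 c_2 \tDelta}$, so summing over admissible pairs contributes a factor proportional to $\binom{N_c}{2}$. It therefore remains to show that $\sum_c \binom{N_c}{2} = \Omega(\tDelta^{2} L)$. Here Jensen's Inequality applied to the convex map $x \mapsto \binom{x}{2}$ does the job: the average of $N_c$ over the $L$ colours of $L(v)$ is at least $c_1 \tDelta / 4$, yielding $\sum_c \binom{N_c}{2} \geq L \binom{c_1 \tDelta / 4}{2} = \Omega(\tDelta^{2} L)$. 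Substituting $L = \lfloor \tDelta / 2 \rfloor$ and $(1 - 1/L)^{5 c_2 \tDelta} = e^{-10 c_2 (1 + o(1))}$ yields $\EE[X_v] = \Omega(\tDelta \cdot e^{-10 c_2} / 64)$, which comfortably exceeds $e^{-16} \tDelta$ for $\tDelta$ sufficiently large.

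The main step I expect to require care is the convexity argument just described: the type-$2$ hypothesis controls only the linear sum $\sum_c N_c$, whereas the pair count is quadratic in $N_c$, and a priori the mass could concentrate in a useless way without the convexity input. A secondary concern is that the set $S$ to be avoided is strictly larger than in Lemma \ref{L:1:5:2} (it contains $N(u_2)$ in addition to $N(u_1)$), which inflates the exponent in $(1 - 1/L)^{|S|}$ from $3 c_2$ to $5 c_2$; this loss is absorbed by the slack between $e^{-10 c_2}/64$ and $e^{-16}$, so the bound still goes through.
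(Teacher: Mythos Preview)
Your proof is correct and follows essentially the same approach as the paper: both introduce the auxiliary event that exactly one pair of out-neighbours of $v$ receives a given colour $c \in L(v)$ while the set $S = N(u_1) \cup N(u_2) \cup N^+(v)$ avoids $c$ and both vertices survive Step~3, then sum over colours and pairs. The only notable difference is in the quadratic lower bound on $\sum_c \binom{N_c}{2}$: the paper uses Cauchy--Schwarz in the form $L\sum_c N_c^2 \geq (\sum_c N_c)^2$ together with the crude inequality $\binom{N_c}{2} \geq N_c^2/3$, whereas you apply Jensen's inequality directly to the convex map $x \mapsto \binom{x}{2}$; your route is arguably cleaner since it sidesteps the issue that $\binom{N_c}{2} \geq N_c^2/3$ fails for $N_c \leq 2$.
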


\begin{proof}
Let $X'_v$ be the random variable denoting the number of colors in $L(v)$
that are assigned to exactly two out-neighbors of $v$ and are
retained by both of these vertices. Clearly $X_v \geq X'_v$ and
therefore it suffices to consider $\EE[X'_v]$.

Note that any color $c\in L(v)$ will be counted by $X'_v$ if two vertices $u,w
\in N^{+}(v)$ are colored $c$ and no other vertex in $S = N(u)
\cup N^{+}(v) \cup N(w)$ is assigned color $c$. From this, we will deduce
a lower bound on $\EE[X'_v]$. Clearly, there are $\binom{N_c}{2}$ choices for the set
$\{u,w\}$. Since $v$ is of type 2, we have $\sum_{c\in L(v)}N_c \geq \frac{c_1\tDelta L}{4}$. The probability that no vertex in $S$ gets colored $i$ is at least $(1- \frac{1}{C})^{|S|} \geq  (1- \frac{1}{C})^{5c_2
\tDelta}$. Note that these two vertices also have to keep their color after Step 3. Therefore:
\begin{eqnarray*}
\PP[\textrm{c is counted in $X'_v$}] &\geq& \binom{N_c}{2} \left( \frac{1}{L}
\right)^2 \left(1- \frac{1}{L}\right)^{5c_2 \tDelta} \left(\frac{1}{2}\right)^2.
\end{eqnarray*}
By linearity of expectation and the fact that $$L\left(\sum_{c\in L(v)}{N_c}^2\right) \geq \left(\sum_{c\in L(v)}N_c\right)^2 \geq \frac{{c_1}^2\tDelta^2L^2}{16},$$ we get
\begin{eqnarray*}
\EE[X'_v] &\geq& \left(\sum_{c\in L(v)}\frac{{N_c}^2}{3}\right) \left( \frac{1}{L}
\right)^2 \left(1- \frac{1}{L}\right)^{5c_2 \tDelta} \left(\frac{1}{2}\right)^2\\
&\geq& \frac{1}{3} \frac{{c_1}^2\tDelta^2L}{16} \left( \frac{1}{L} \right)^2 e^{-11} \frac{1}{4}\\
&\geq& e^{-16}\tDelta
\end{eqnarray*}
for $\tDelta$ sufficiently large.
\end{proof}

\begin{lemma} \label{L:1:5:5}
For every type-2 vertex $v$, we have $$\PP \left[ | X_v - \EE[X_v] | > \log_2 \tDelta \sqrt{\EE[X_v]} \,
\right] < \tDelta^{-7}.$$
\end{lemma}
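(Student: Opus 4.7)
The plan is to mirror the proof of Lemma~\ref{L:1:5:3}. Write $X_v = AX_v - DX_v$, where $AX_v$ counts the colors of $L(v)$ initially assigned to at least two out-neighbors of $v$, and $DX_v$ counts those colors so assigned but then removed from at least one of those out-neighbors during Step~2 or Step~3. It therefore suffices to show that each of $AX_v$ and $DX_v$ lies within $t := \tfrac{1}{2}\log_2 \tDelta\sqrt{\EE[X_v]}$ of its mean except with probability at most $\tfrac{1}{2}\tDelta^{-7}$, and then to conclude by the union bound.

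For $AX_v$, I would apply the Simple Concentration Bound (Theorem~\ref{th:1.5}) with the trials being the initial color assignments of the vertices in $N^+(v)$. Changing a single such assignment alters $AX_v$ by at most $2$ (the old color may stop being doubly covered; the new color may start being doubly covered), so one takes $c=2$ with $n\leq c_2\tDelta$, yielding a bound of the form $2e^{-t^2/(32 c_2\tDelta)}=o(\tDelta^{-7})$ since $\EE[X_v]=\Omega(\tDelta)$ by Lemma~\ref{L:1:5:4}.

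For $DX_v$, I would apply Talagrand's Inequality (Theorem~\ref{th:2}). Each random trial (an initial color assignment or a Step~3 coin-flip) affects $DX_v$ by a bounded amount, so $c=O(1)$. To certify $DX_v\geq s$, for each of the $s$ witnessing colors one exposes the two initial assignments placing that color on two out-neighbors of $v$, together with at most three further trials proving that one of those two vertices was subsequently uncolored (either the two random color assignments completing a monochromatic directed path of length at least~$2$ through that vertex, or the single Step~3 Bernoulli outcome that uncolored it). Thus $r\leq 5$. With $\lambda=t/2$, the hypothesis $\lambda\leq \EE[DX_v]$ of Talagrand's Inequality requires the lower bound $\EE[DX_v]=\Omega(\tDelta)$. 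This is the only step demanding genuine work: repeating the calculation of Lemma~\ref{L:1:5:4} and observing that any color $c\in L(v)$ assigned to exactly two out-neighbors of $v$ has constant probability of having one of them uncolored in Step~3 (an independent coin-flip of probability $1/2$), we obtain $\EE[DX_v]\geq e^{-16}\tDelta$ by the same counting driven by $\sum_{c\in L(v)}N_c\geq c_1\tDelta L/4$.

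Given this lower bound, Talagrand yields $\PP[|DX_v-\EE[DX_v]|>t/2]\leq 4e^{-t^2/\Theta(\tDelta)}=o(\tDelta^{-7})$. A union bound with the $AX_v$ estimate then gives $\PP[|X_v-\EE[X_v]|>\log_2\tDelta\sqrt{\EE[X_v]}]<\tDelta^{-7}$, as claimed. The main obstacle, exactly as in the type-1 case, is producing the $\Omega(\tDelta)$ lower bound on $\EE[DX_v]$ so that Talagrand's Inequality is applicable; everything else is routine bookkeeping mirroring Lemma~\ref{L:1:5:3}.
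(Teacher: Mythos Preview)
Your proposal is correct and follows essentially the same approach as the paper: decompose $X_v = AX_v - DX_v$, bound $AX_v$ via the Simple Concentration Bound and $DX_v$ via Talagrand's Inequality with $r=5$, and verify $\EE[DX_v]=\Omega(\tDelta)$ by the same exactly-two-out-neighbors-plus-Step-3 calculation. The only cosmetic difference is that the paper takes $c=1$ for $AX_v$ (a single reassignment changes the count by at most one, since losing one doubly-covered color and gaining another nets to zero), whereas you take $c=2$; this is harmless for the conclusion.
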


\begin{proof}
Let $AX_v$ be the random variable counting the number of colors in $L(v)$
assigned to at least two out-neighbors of $v$, and $DX_v$ the
random variable that counts the number of colors in $L(v)$ assigned to at
least two out-neighbors of $v$ but removed from at least one of
them. Clearly $X_v = AX_v - DX_v$ and therefore it suffices to
show that $AX_v$ and $DX_v$ are sufficiently concentrated
around their mean. Similarly to the previous case, we will show that for $t = \frac{1}{2} (\log_2
\tDelta) \sqrt{\EE[X_v]}$ the following estimates hold:

\medskip

\noindent\textbf{Claim 1.3.} $\PP \left[|AX_v - \EE[AX_v]| > t \right] < 2 e^{-t^2/(8
\tDelta)}$.

\medskip

\noindent\textbf{Claim 1.4.} $\PP \left[|DX_v - \EE[DX_v]| > t \right]
  < 4 e^{-t^2/(200 \tDelta)}$.

\medskip
\noindent Again, for $\tDelta$ sufficiently large,
\begin{eqnarray*}
\PP[ | X_v - \EE[X_v] | > \log_2 \tDelta \sqrt{\EE[X_v]}] &\leq& 2
e^{-\frac{t^2}{8 \tDelta}} + 4 e^{-\frac{t^2}{200 \tDelta}}\\
&\leq& \tDelta^{- \log_2 \tDelta}\\
&<& \tDelta^{-7},
\end{eqnarray*}
as required. So it remains to establish both claims.

Claim 1.3 is proved exactly in the same way as Claim~1.1 was proved, namely using the Simple Concentration Bound. To prove Claim 1.4, we apply Talagrand's Inequality to the random variable $DX_v$. Note that we can take $c=1$ since any single random color
assignment can affect $DX_v$ by at most 1. Now suppose that
$DX_v \geq s$. One can certify that $DX_v \geq s$ by exposing,
for each of the $s$ colors $i$, two random color assignments in
$N^{+}(v)$ that certify that at least two vertices got color $i$. If one vertex lost its color during Step~2 of our random process, we can expose two other color assignments to certify, or if it gets uncolored during Step 3, we can expose the random choice in this step to prove that it is uncolored. Therefore, $DX_v
\geq s$ can be certified by exposing $5s$ random choices, and
hence we may take $r=5$ in Talagrand's Inequality. Note that $$t=
\frac{1}{2} \log_2 \tDelta \sqrt{\EE[X_v]}
>\!\!> 60c \sqrt{r\EE[DX_v]} $$ since $\EE[X_v] \geq \tDelta/e^{16}$ and
$\EE[DX_v] \leq c_2 \tDelta$. Now, taking $\lambda = \frac{1}{2}t$ in
Talagrand's Inequality, we obtain
that $$\PP[|DX_v -\EE[DX_v]| > t] \leq \PP[|DX_v-\EE[DX_v]| >
\lambda + 60c \sqrt{r\EE[DX_v]}].$$ Therefore, provided that $ \lambda
\leq \EE[DX_v]$, Claim~1.4 is confirmed.

It is now sufficient to show that $\EE[DX_v] = \Omega (\tDelta)$,
since $\lambda = O(\log_2 \tDelta \sqrt{\tDelta})$. It is evident that any color $c$ will be counted in $DX_v$ if it is assigned to \textit{exactly} two out-neighbors of $v$ and also removed from at least one of them. By the same argument as in the previous case, a vertex always has a positive probability to get uncolored during Step~3 of our random process. So:
\begin{eqnarray*}
\PP[\textrm{c is counted in $DX_v$}] &\geq& \binom{N_c}{2} \left( \frac{1}{L}
\right)^2 \left(1- \frac{1}{L}\right)^{5c_2 \tDelta} \left(\frac{1}{2}\right).
\end{eqnarray*}
By linearity of expectation, we deduce
\begin{eqnarray*}
\EE[DX_v] &\geq& \left(\sum_{c\in L(v)}\frac{{N_c}^2}{3}\right) \left( \frac{1}{L}
\right)^2 \left(1- \frac{1}{L}\right)^{5c_2 \tDelta} \left(\frac{1}{2}\right)\\
&\geq& \frac{1}{3} \frac{{c_1}^2\tDelta^2L}{16} \left( \frac{1}{L} \right)^2 e^{-11} \frac{1}{2}\\
&\geq& e^{-16}\tDelta
\end{eqnarray*}
for $\tDelta$ sufficiently large. Therefore, $\EE[DX_v] = \Omega (\tDelta)$ as required. 
\end{proof}
\medskip

Now, since $\EE[X_v] \leq c_2 \tDelta$, Lemmas~\ref{L:1:5:4} and~\ref{L:1:5:5} imply that $$\PP[A_v] < \PP \left[X_v<\frac{1}{2}e^{-16}\tDelta+1 \right]< \tDelta^{-7}$$ for every type-2 vertex $v$. 
This completes the proof of Theorem \ref{theorem:upbound-list}.
\end{proof}

\section{Conclusion} \label{section:ccl}

We conclude the paper with a question concerning oriented planar graphs.
A conjecture of Neumann-Lara \cite{NL85} states the following.

\begin{conjecture} [Neumann-Lara]
For every planar oriented graph $D$,
we have $\vec{\chi}(D) \leq 2$.
\end{conjecture}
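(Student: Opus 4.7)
The plan is to attempt the Neumann-Lara conjecture via induction on $|V(D)|$ through a discharging argument on a minimal counterexample, in the spirit of Grötzsch-type theorems and the Four Color Theorem. So I would fix a plane embedding of a hypothetical minimum counterexample $D$ (an oriented planar digraph with $\vec{\chi}(D) \geq 3$ and $|V(D)|$ minimum) and try to derive a contradiction from its structure.

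First, I would clear away the easy reducibilities. Any vertex $v$ with $d^+(v) = 0$ or $d^-(v) = 0$ cannot lie on a directed cycle, so it may be removed, 2-colored inductively in $D-v$, and then assigned either color; hence a minimal $D$ has $d^+(v), d^-(v) \geq 1$ everywhere, and in particular total degree at least $2$. Similarly, one should rule out cut vertices, separating pairs, and small cyclically separating cuts, since a 2-coloring of each side can be merged by flipping colors on one side if needed (this is where the acyclic-set condition interacts nontrivially with connectivity, and a careful formulation is required).

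Next, I would set up a discharging scheme on the plane embedding of the underlying graph of $D$, assigning charge $d(v) - 4$ to each vertex and $\ell(f) - 4$ to each face. Euler's formula gives total charge $-8$, and appropriate local rules (possibly biased by the orientation around each face, so that faces bounded by a directed cycle donate differently than faces bounded by a non-directed closed walk) would be designed so that, under the reducibility conditions established above, every vertex and every face ends with nonnegative charge, contradicting the total. The aim is to force $D$ to contain a specific small subconfiguration whose removal is reducible in the dichromatic sense.

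The main obstacle, and the reason this conjecture remains open since 1985, is precisely the reducibility step. Unlike ordinary $k$-coloring, where the interaction between a vertex and its coloring is local, in the dichromatic setting a single vertex $v$ can participate in arbitrarily many directed cycles of arbitrary length, so fixing the color of $v$ has genuinely global consequences for the coloring of $D - v$: extending a 2-coloring from $D - v$ to $v$ may require a coordinated re-partition of the two color classes elsewhere in the graph. Any honest attack would likely have to supplement the discharging with a nontrivial global exchange argument on an auxiliary structure (for instance, rerouting alternating directed paths between the two color classes, in analogy with Kempe chains), or else retreat to a weaker statement such as the version for oriented planar graphs with sufficiently large (directed) girth, where a Lovász Local Lemma argument along the lines of Section~\ref{section:upper} becomes tractable. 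I would expect a full proof to require a new structural insight specific to oriented planarity that the present toolkit does not obviously supply.
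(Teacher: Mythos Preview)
The statement you are addressing is not a theorem in the paper; it is the Neumann-Lara Conjecture, which the paper records in its concluding section as an \emph{open problem}. The paper offers no proof, nor even a proof sketch---it merely states the conjecture, notes that the weaker bound $\vec{\chi_\ell}(D) \leq 3$ for planar oriented graphs follows easily from degeneracy, and asks whether there exist planar oriented graphs with $\vec{\chi_\ell}(D) = 3$. There is therefore nothing in the paper against which to compare your attempt.

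As for the proposal itself, you have not given a proof either, and you say so explicitly: you outline a discharging framework, identify the reducibility step as the bottleneck, and correctly observe that the global nature of directed cycles obstructs any naive Kempe-chain analogue. That diagnosis is accurate and matches the current state of the problem, but it is an explanation of \emph{why} the conjecture is hard rather than a resolution of it. If the task was to supply a proof, the honest answer is that none is known; your write-up would be appropriate as commentary on the difficulty of the question, not as a proof proposal.
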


It is easy to show by using degeneracy that for every planar oriented graph $D$,
we have $\vec{\chi}(D) \leq \vec{\chi_\ell}(D) \leq 3$. 
We wonder how large can the list dichromatic number of a planar oriented graph be.
In particular:

\begin{question}
Are there planar oriented graphs $D$ with $\vec{\chi_\ell}(D)= 3$?
\end{question}

\section*{Acknowledgements}

The authors are grateful to St\'{e}phan Thomass\'{e} for interesting discussions
as well as for the idea of the proof of Theorem \ref{theorem:lower-random-bipartite}.


\begin{thebibliography}{99}

\bibitem{AK98} N. Alon, and M. Krivelevich. The choice number of random bipartite graphs. \textit{Annals of Combinatorics}, 2(4):291-297, 1998.

\bibitem{AKS99} N. Alon, M. Krivelevich, and B. Sudakov. List coloring of random and pseudo-random graphs. \textit{Combinatorica}, 19(4):453-472, 1999.

\bibitem{AS04} N. Alon, and J. H. Spencer. The probabilistic method. \textit{John Wiley \& Sons}, 2004.

\bibitem{BFJKM04} D. Bokal, G. Fijav\v{z}, M. Juvan, P.M. Kayll, and B. Mohar. The circular chromatic number of a digraph. \textit{Journal of Graph Theory}, 46:227-240, 2004.

\bibitem{Erd79b} P. Erd\H{o}s. Problems and results in number theory and graphs theory. In \textit{Proceedings of the 9th Manitoba Conference on Numerical Mathematics and Computing}, pages 3–21, 1979.

\bibitem{EGK91}
P.~Erd\H{o}s, J. Gimbel, and D. Kratsch. Some extremal results in cochromatic
and dichromatic theory. \textit{Journal of Graph Theory}, 15:579-585, 1991.

\bibitem{EM64} P. Erd\H{o}s, and L. Moser. On the representation of directed 
graphs as unions of orderings. \textit{Magyar Tud. Akad. Mat. Kutato Int. Kozl.},
9:125--132, 1964.

\bibitem{ERT79} P. Erd\H{o}s, A.L. Rubin, and H. Taylor. Choosability in graphs. In \textit{Proceedings of the West Coast Conference on Combinatorics, Graph Theory and Computing, Congressus Numerantium 26}, pages 125–157, 1979.


\bibitem{H11} A.~Harutyunyan. Brooks-type results for coloring of digraphs. Ph.D.
thesis, Simon Fraser University, 2011.

\bibitem{HM11} A. Harutyunyan, and B. Mohar. Strengthened Brooks Theorem for digraphs of girth three. \textit{Electronic Journal of Combinatorics}, 18:\#P195, 2011.

\bibitem{HM11b} A. Harutyunyan, and B. Mohar. Gallai's Theorem for List Coloring of Digraphs. \textit{SIAM Journal on Discrete Mathematics}, 25(1):170-180, 2011.

\bibitem{HM12} A. Harutyunyan, and B. Mohar. Two results on the digraph
chromatic number. \textit{Discrete Mathematics}, 312(10):1823-1826, 2012. 

\bibitem{HM15} A. Harutyunyan, and B. Mohar. Planar digraphs of digirth five are 2-colorable. To appear in \textit{Journal of Graph Theory}.

\bibitem{Joh96} A. Johansson. Asymptotic choice number for triangle free graphs. DIMACS Technical Report 91-5, 1996.

\bibitem{O02} K. Ohba. On chromatic-choosable graphs. \textit{Journal of Graph Theory}, 40(2):130-135, 2002.

\bibitem{MT84} U. Manber, and M. Tompa. The effect of number of Hamiltonian paths on the complexity of a vertex-coloring problem. \textit{SIAM Journal on Computing}, 13(1):109–115, 1984.

 \bibitem{M2003} B. Mohar. Circular colorings of edge-weighted graphs. \textit{Journal of Graph Theory}, 43:107-116, 2003.

\bibitem{M10} B. Mohar. Eigenvalues and colorings of digraphs.
\textit{Linear Algebra and its Applications}, 432(9):2273-2277, 2010.

\bibitem{MR02} M. Molloy, and B. Reed. Graph colouring and the probabilistic method, volume 23. \textit{Springer Science \& Business Media}, 2002.

\bibitem{Neu82} V. Neumann-Lara. The dichromatic number of a digraph. \textit{Journal of Combinatorial Theory, Series B}, 33:265-270, 1982.

\bibitem{NL85} V. Neumann-Lara, Vertex colourings in digraphs. Some Problems.
Technical Report, University of Waterloo, July 8, 1985. 


\bibitem{NRW14} J. A. Noel, B. A. Reed, and H. Wu. A proof of a conjecture of Ohba. \textit{Journal of Graph Theory}, 79(2):86-102, 2014.

\bibitem{SS08} J. Spencer, and C. Subramanian. On the size of induced acyclic subgraphs in random digraphs. \textit{Discrete Mathematics and Theoretical Computer Science}, 10(2):47-54, 2008.

\end{thebibliography}
\end{document}